\newtheorem{theorem}{Theorem}[section]
\newtheorem{corollary}[theorem]{Corollary}
\newtheorem{lemma}[theorem]{Lemma}
\newtheorem{example}[theorem]{Example}
\newtheorem{definition}[theorem]{Definition}
\newtheorem{proposition}[theorem]{Proposition}
\theoremstyle{definition}
\theoremstyle{remark}
\newtheorem{remark}[theorem]{Remark}
\numberwithin{equation}{section}
\begin{document}

\title{Orbits of linear operators and Banach space geometry}

\author[J. M. Aug\'{e}]{Jean-Matthieu Aug\'{e}}\address{Universit\'{e} Bordeaux 1
351, cours de la Lib\'{e}ration - F 33405 TALENCE cedex }
\email{jean-matthieu.auge@math.u-bordeaux1.fr}
\subjclass[2010]{Primary 47A05, Secondary 47A15, 47A16}

\keywords{Orbits of operators, compact operators, $\sigma$-porosity and Haar negligibility, asymptotic smoothness}

\begin{abstract}
Let $T$ be a bounded linear operator on a (real or complex) Banach space $X$. If $(a_n)$ is a sequence of non-negative numbers tending to 0. Then, the set of $x \in X$ such that $\|T^nx\| \geqslant a_n \|T^n\|$ for infinitely many $n$'s has a complement which is both $\sigma$-porous and Haar-null. We also compute (for some classical Banach space) optimal exponents $q>0$, such that for every non nilpotent operator $T$, there exists $x \in X$ such that $(\|T^nx\|/\|T^n\|) \notin \ell^{q}(\mathbb{N})$, using techniques which involve the modulus of asymptotic uniform smoothness of $X$. 
\end{abstract}

\subjclass[2010]{Primary 47A05, 47A16; Secondary 28A05}

\keywords{Orbits of operators, compact operators, $\sigma$-porosity and Haar negligibility, asymptotic smoothness}

\maketitle

\section{Introduction}

Let $X$ be a (real or complex) Banach space, and let $T$ be a linear bounded operator on $X$. For $x \in X$, let 
$$ O_T(x)=\{T^nx, n \geqslant 0\}$$ be the orbit of $x$ under the action of $T$. The study of orbits is connected with the famous invariant subset problem which asks if there exists an operator on $X$ with non trivial invariant subset. Indeed, $T$ does not have any trivial invariant subset if and only if for each $x \neq 0$, $O_T(x)$ is dense in $X$. Such an operator was constructed by Read \cite{R} in the space $\ell^1$, but in the Hilbert space, the problem is still open. If at least one orbit is dense, the operator is called hyperyclic (and the corresponding vector a hypercyclic vector). This class of operators has received much attention during the last two decades (see \cite{BM} for many informations on this topic). In this paper, we will, however, study some more regular orbits. M\"uller \cite{Mu} showed the following result, which roughly says that there are many points with large orbits for many powers.

\begin{theorem} \label{thm:M} Let $T$ be a bounded linear operator on $X$, and let $(a_n)$ be a sequence of non negative numbers such that $a_n \rightarrow 0$. Then, the set
$$ \{x \in X, \|T^nx\| \geqslant a_n \|T^n\| \; \rm{for \; infinitely \; many \;}  n's\}$$ is residual in $X$.
\end{theorem}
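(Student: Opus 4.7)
The plan is a standard Baire category argument: I will show the complement of the set in question is a countable union of closed, nowhere dense sets. The main work is to verify that these closed sets have empty interior, and the key tool is the linearity of $T^n$ together with the triangle inequality.

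First, I would denote
$$ A_N = \{x \in X : \|T^n x\| \leqslant a_n \|T^n\| \text{ for all } n \geqslant N\}. $$
Each $A_N$ is closed, being an intersection of closed sets, and the complement of the set in the theorem is exactly $\bigcup_{N \geqslant 1} A_N$. So by the Baire category theorem it suffices to show that each $A_N$ has empty interior. One may assume $\|T^n\| > 0$ for infinitely many $n$, since otherwise $T$ is eventually zero and the statement is trivial.

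The heart of the argument is the following symmetrization. Suppose some $A_N$ contains a closed ball $\overline{B}(x_0, r)$ with $r>0$. Then for every $v \in X$ with $\|v\| \leqslant r$, both $x_0+v$ and $x_0-v$ lie in $A_N$, so for all $n \geqslant N$,
$$ \|T^n v\| = \tfrac{1}{2}\,\|T^n(x_0+v) - T^n(x_0-v)\| \leqslant \tfrac{1}{2}\bigl(\|T^n(x_0+v)\| + \|T^n(x_0-v)\|\bigr) \leqslant a_n \|T^n\|. $$
Taking the supremum over $\|v\| \leqslant r$ yields $r \|T^n\| \leqslant a_n \|T^n\|$, so $r \leqslant a_n$ for every sufficiently large $n$ with $\|T^n\|\neq 0$. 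Since $a_n \to 0$, this is a contradiction, so $A_N$ has empty interior.

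I expect no serious obstacle: the argument is elementary, and the only subtlety is the bookkeeping when $\|T^n\|$ vanishes for some $n$, which is easily dispatched by observing that $T^n = 0$ forces the defining inequality $\|T^n x\| \geqslant a_n \|T^n\|$ to hold automatically (so it either helps or reduces to a trivial case). Combining these observations, the complement of the target set is meager, which is the desired residuality.
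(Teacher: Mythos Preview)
Your argument is correct. One tiny imprecision: the complement of the target set is actually $\bigcup_N \{x:\|T^nx\|<a_n\|T^n\|\text{ for all }n\ge N\}$, which is only \emph{contained} in your $\bigcup_N A_N$ (you switched to $\le$ to get closedness); but since you show each $A_N$ is nowhere dense, the smaller complement is a fortiori meager, so nothing is lost.

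As for comparison: the paper does not give a separate proof of this theorem---it is quoted from M\"uller. What the paper does prove is the strengthening (Theorem~\ref{smallness}) that the complement is $\sigma$-porous, which of course implies meagerness. The porosity argument uses the same $\pm$ symmetrization idea you use, but in a quantitative form: given $x\in E_N$ and $\epsilon>0$, one picks $n$ large with $a_n\le\epsilon/8$ and a near-norming vector $y_0$, shows that one of $x\pm\frac{\epsilon}{2}y_0$ satisfies $\|T_n(\cdot)\|\ge\frac{\epsilon}{4}\|T_n\|$, and then verifies that a ball of radius $\frac{\epsilon}{8}$ around that point misses $E_N$ entirely. Your proof is the cleaner route if one only wants residuality; the paper's extra work buys the stronger smallness conclusion.
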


A quick glance at the proof shows that the powers can be replaced by a sequence $(T_n)$ of bounded linear operators. It may also be worth emphasizing that this a stronger form than the uniform boundedness principle. Indeed, suppose that $\sup \|T_n\|=\infty$ and find a sequence $(n_j)$ such that $\|T_{n_j}\| \rightarrow \infty$. Put $a_j=\frac{1}{\sqrt{\|T_{n_j}\|}}$ and apply the above result to $S_j=T_{n_j}$ to obtain that there is a residual set of points $x \in X$ such that $\|T_{n_j}x\| \geqslant \sqrt{\|T_{n_j}\|}$ for infinitely many $j's$, and in particular $\sup \|T_{n}x\|=\infty$. Equivalently, Theorem \ref{thm:M} says that the complement of the set in question is of the first category. Now, there are several other notions of smallness in analysis. In this paper, we consider two of them: $\sigma$-porosity, which is a stronger form of smallness than sets of first category and Haar-negligibility, which is an extension of the sets of Lebesgue-measure $0$ in infinite di
 mension (see next section for definitions). These two notions are actually not comparable: Preiss and Tisier (see \cite{BL}, chapter 6) showed that any real Banach space of infinite dimension can be decomposed as the disjoint union of two sets, of which one is $\sigma$-porous and the other Haar-null. In section 2, we generalize Theorem \ref{thm:M} as follows:

\begin{theorem}
\label{smallness}
Let $X$ be a Banach space (real or complexe) and $(T_n)$ be a sequence of bounded linear operators on $X$. Let also  $(a_n)$ be a sequence of non-negative numbers such that $a_n \rightarrow 0$. Then, the set 
$$ \{x \in X, \|T_nx\| \geqslant a_n \|T_n\| \; \rm{for \; infinitely \; many \;}n's\}$$
has a complement which is $\sigma$-porous. If the space $X$ is separable, then this complement is also Haar-null.
\end{theorem}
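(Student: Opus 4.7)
The plan is to write the complement of the set appearing in the statement as the countable union
$$ \bigcup_{N \geqslant 1} A_N, \quad\text{where}\quad A_N = \bigcap_{n \geqslant N}\bigl\{x \in X : \|T_n x\| < a_n\|T_n\|\bigr\}, $$
and then verify that each $A_N$ is porous and (in the separable case) Haar-null; as both notions are stable under countable unions this will suffice. The degenerate case in which $a_n = 0$ for infinitely many $n$ may be set aside, as the set in the statement then equals all of $X$.

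For $\sigma$-porosity I would fix $x \in A_N$ and a small $\varepsilon > 0$, pick $n \geqslant N$ large enough that $a_n \leqslant \varepsilon/16$, choose a unit vector $y_n$ with $\|T_n y_n\| \geqslant \|T_n\|/2$, and set $y = x + (\varepsilon/2) y_n$. A double use of the triangle inequality then yields $\|T_n z\| \geqslant (\varepsilon/4 - a_n - r)\|T_n\|$ for every $z \in B(y,r)$; with $r = \varepsilon/8$ this strictly exceeds $a_n\|T_n\|$, so $B(y,\varepsilon/8)$ is disjoint from $A_N$. Since $\|y - x\| = \varepsilon/2 < \varepsilon$ and $\varepsilon/8 = \tfrac{1}{4}\|y-x\|$, this exhibits porosity of $A_N$ at $x$ with the uniform constant $1/4$.

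The Haar-null step is the delicate part, since one needs a single Borel probability $\mu$ on $X$ with $\mu(A_N + x) = 0$ for every $x \in X$. I plan to take $\mu$ as the law of $ty$ with $t$ uniform on $[0,1]$ for a well-chosen $y$, and then control the slice $\{t \in [0,1] : ty - x \in A_N\}$. Any two $t_1, t_2$ in this slice satisfy $|t_1 - t_2|\,\|T_k y\| \leqslant 2 a_k \|T_k\|$ for every $k \geqslant N$, so what I really need is a single $y$ such that $a_n\|T_n\|/\|T_n y\| \to 0$ along some subsequence. The idea that unlocks this is to feed Theorem~\ref{thm:M} back into itself: applied to the slower-decaying sequence $(\sqrt{a_n})$ it yields a residual, hence non-empty, set of vectors $y$ with $\|T_n y\| \geqslant \sqrt{a_n}\,\|T_n\|$ for infinitely many $n$, and along this subsequence the slice bound becomes $2\sqrt{a_k} \to 0$. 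Hence the slice has Lebesgue measure zero and $\mu(A_N + x) = 0$ for every $x$ and every $N$.

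The main obstacle is precisely producing this direction $y$: the porosity estimate is essentially a quantitative reworking of the Baire-category argument, but for Haar-nullness one needs a \emph{uniform} translate-free witness, and there is no obvious direct candidate when individual operators $T_n$ could annihilate any prescribed vector. The self-application of Theorem~\ref{thm:M} with $(\sqrt{a_n})$ neatly supplies both the existence of a good $y$ and the quantitative margin $\sqrt{a_n} \to 0$ that forces the one-dimensional slices to collapse.
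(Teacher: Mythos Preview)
Your proof is correct and follows essentially the same route as the paper: the same decomposition into the sets $A_N$, the same porosity constant $1/4$ (you obtain it slightly more directly by exploiting $x\in A_N$ instead of the paper's $\pm$ symmetrization), and the same Haar-null argument via the one-dimensional line through a vector $y$ produced by the $\sqrt{a_n}$ bootstrap. The only cosmetic differences are that the paper invokes the just-proved $\sigma$-porosity half (rather than M\"uller's original theorem) to supply $y$, and bounds the one-dimensional slice by two explicit intervals around $\pm\|T_n x\|/\|T_n y\|$ rather than by your diameter estimate.
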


These notions of smallness, together with linear dynamics, have also been studied (for different problems) in \cite{Bay1} and \cite{BMM}. The example $T=B$ where $B$ is the unweighted backward shift defined on $\ell^1(\mathbb{N})$ by $Be_1=0$ and $Be_k=e_{k-1}$ for $k \geqslant 2$ (where $(e_k)$ is the canonical basis of $\ell^1$) satisfies for each $x$, $\frac{\|T^nx\|}{\|T^n\|} \rightarrow 0$ because $\|T^n\|=1$ and $\|T^nx\|=\sum_{k=n+1}^{\infty} |x_k|$ for $x=\sum_{k=1}^{\infty} x_k e_k$. Thus, in general, the condition "$a_n \rightarrow 0$" cannot be improved. In section 3, we study some cases involving compact operators to get better estimates. We also give some examples to discuss the limitations of our results. Section 4 contains our main result. As should be clear from what we said, the spirit of this paper is to find point $x \in X$ such that many powers $\|T^nx\|$ are as close as possible from $\|T^n\|$. Beauzamy \cite{Be} showed that given a bounded and linear ope
 rator $T$ on an Hilbert space $H$, the set 
$$ \{x \in H, \sum_{n=1}^{\infty} \dfrac{\|T^nx\|}{\|T^n\|}=\infty\} $$ was dense in $H$ (which in some sense, says that $\|T^nx\|$ is not too far from $\|T^n\|$ for many powers). Using an alternative proof, M\"uller \cite{Mu} showed that for each $q<2$, the set 
$$ \{x \in H, \sum_{n=1}^{\infty} \left( \dfrac{\|T^nx\|}{\|T^n\|} \right)^{q}=\infty\} $$ 
was dense in $H$ and also showed a similar statement for Banach operators (replacing $q<2$ by $q<1$). He also exhibed examples showing that the constants $1$ and $2$ were optimals for Banach space and Hilbert space operators. These problems also have connections with some famous plank theorems of Ball (see \cite{Bal1} and \cite{Bal2}). We recall those results.

\begin{theorem} (K. Ball, \cite{Bal1}) Let $X$ be a (real or complex) Banach space and $(f_n) \subset X^*$ such that $\|f_n\|=1$ for each $n$. Let also $(\alpha_n) \subset \mathbb{R}^+$ such that $\sum_{n=1}^{\infty} \alpha_n<1$. Then there is a point $x$, $\|x\|=1$ such that for each $n$, $ |\langle f_n,x \rangle | \geqslant \alpha_n$.
\end{theorem}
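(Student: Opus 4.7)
The plan is to argue by contradiction. Assume that for every $x \in S_X$ there is some $n$ with $|\langle f_n, x\rangle| < \alpha_n$, so that the unit sphere is covered by the symmetric planks $P_n = \{x : |\langle f_n, x\rangle| < \alpha_n\}$, and aim to derive $\sum_n \alpha_n \geq 1$. A preliminary scaling reduces the task to exhibiting $x \in B_X$ with $|\langle f_n, x\rangle| \geq \alpha_n$ for all $n$: the unit vector $x/\|x\|$ then satisfies the desired inequalities since $\|x\| \leq 1$ only improves them.

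I would first treat the finite truncation to $f_1, \dots, f_N$ and then pass to the limit. For the finite case, if no choice of $x \in B_X$ and unit scalars $\varepsilon_1, \dots, \varepsilon_N$ (with $|\varepsilon_n| = 1$) satisfies $\operatorname{Re}(\varepsilon_n \langle f_n, x\rangle) \geq \alpha_n$ for every $n \leq N$, then for every sign pattern $\varepsilon$ the closed convex set
$$K_\varepsilon = \{x \in X : \operatorname{Re}(\varepsilon_n \langle f_n, x\rangle) \geq \alpha_n \text{ for all } n \leq N\}$$
is disjoint from $B_X$, and Hahn-Banach produces nonnegative multipliers $c_n(\varepsilon) \geq 0$ witnessing the separation. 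The plan is then to average these separation inequalities over $\varepsilon$ via a Rademacher/Steinhaus random-signs scheme, exploiting $\|f_n\| = 1$ together with a convex-geometric estimate on $\|\sum c_n \varepsilon_n f_n\|$, to force $\sum_{n \leq N} \alpha_n \geq 1$. Passage from the truncated statement to the full one is then a weak-$^*$ compactness extraction in $B_{X^{**}}$, followed by Goldstine's theorem to pull the cluster point $x^{**}$ back into $X$ after a harmless perturbation $\alpha_n \mapsto (1-\eta) \alpha_n$ with $\eta \downarrow 0$.

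The principal obstacle is precisely the combinatorial/convex-geometric step inside the finite case: the naive triangle inequality only yields $\sum c_n \alpha_n \leq \sum c_n$, which does not exploit the hypothesis $\sum \alpha_n < 1$ at all. The sharp plank bound genuinely requires Ball's original ingenuity---a delicate symmetrization or extreme-point analysis on $B_{X^*}$, combined with a carefully chosen averaging over sign patterns $\varepsilon$---to convert the family of Hahn-Banach separation inequalities into the single aggregate estimate $\sum \alpha_n \geq 1$. This is the heart of the argument and the step at which a routine approach is very unlikely to succeed without the key structural insight of Ball's paper.
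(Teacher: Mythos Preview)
The paper does not supply a proof of this theorem: it is quoted as a result of Ball with a citation to \cite{Bal1}, and the surrounding text only explains how one \emph{uses} the statement (via adjoints of $T^n$) to recover the density results about orbits. So there is no proof in the paper to compare your attempt against.

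As for the proposal itself, it is not a proof but an outline in which the decisive step is explicitly left open. You set up a contradiction, pass to a finite truncation, invoke Hahn--Banach to separate each sign-pattern set $K_\varepsilon$ from $B_X$, and then plan to ``average'' the resulting multipliers over $\varepsilon$. But you yourself identify the obstacle: from the separation data one only gets inequalities of the shape $\sum_n c_n(\varepsilon)\alpha_n \leqslant \bigl\|\sum_n c_n(\varepsilon)\varepsilon_n f_n\bigr\|$, and the triangle inequality collapses this to the useless $\sum c_n\alpha_n \leqslant \sum c_n$. You then defer to ``Ball's original ingenuity'' for the missing combinatorial/convex step. That is precisely the content of the theorem, so what remains is a frame without the picture.

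It may also be worth noting that Ball's actual argument in \cite{Bal1} does not proceed via Hahn--Banach separation and sign averaging. He maximizes a weighted product $\prod_n |\langle f_n,x\rangle|^{w_n}$ over the unit ball (for suitably chosen weights $w_n$) and analyzes the extremizer; the AM--GM inequality and a first-order optimality condition then force $|\langle f_n,x\rangle| \geqslant \alpha_n$. Your scheme, even if the averaging step could be made to work, would be a genuinely different route---but as it stands the gap is exactly the hard part of the theorem.
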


In \cite{Bal2}, the condition $\sum_{n=1}^{\infty} \alpha_n<1$ is improved by $\sum_{n=1}^{\infty} \alpha_n^2<1$ for complex Hilbert spaces. Now, considering the adjoint of $T^n$, one can show that a similar statement holds for sequence of operators (see \cite{MV} for details). This gives a direct proof of the above results. Anyways, the previous exponents suggest that for a Banach space $X$, the quantity
\begin{eqnarray*} q_X=\sup \{q>0, {\rm{for \; every \; non \; nilpotent \; and \; bounded \; linear \; operator}} \; T;
\end{eqnarray*}
$$ \exists x \in X, \sum_{n=1}^{\infty}\left( \dfrac{\|T^nx\|}{\|T^n\|} \right )^{q}=\infty\} $$
should depend on the geometry of $X$. This will be the case and we will in particular obtain:

\begin{theorem} $q_{\ell^p}=p$, $q_{L^p}=\min(p,2)$ ($1 \leqslant p<\infty$), $q_{c_0}=\infty$.
\end{theorem}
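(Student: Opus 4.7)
This theorem is the culmination of Section~4 and combines a general lower bound for $q_X$ in terms of the modulus of asymptotic uniform smoothness with specific upper‑bound examples. The central ingredient, developed earlier in the section, is the estimate $q_X \geq p$ whenever $\bar{\rho}_X(t) \leq Ct^p$ for small $t$, where $\bar{\rho}_X$ denotes the modulus of asymptotic uniform smoothness of $X$. Once this is granted, the three lower bounds follow from the classical moduli: $\bar{\rho}_{c_0}(t) = \max(0,t-1)$ vanishes for $t \leq 1$, so $c_0$ is AUS of every power type, yielding $q_{c_0} = \infty$; the formula $\bar{\rho}_{\ell^p}(t) = (1+t^p)^{1/p}-1$ is of power type exactly $p$, yielding $q_{\ell^p} \geq p$; and $\bar{\rho}_{L^p}(t) \asymp t^{\min(p,2)}$ yields $q_{L^p} \geq \min(p,2)$.

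\textbf{Sketch of the lower bound.} Given a non‑nilpotent $T$, the plan is to build $x = \sum_k x_k$ inductively, with $x_k$ chosen in a decreasing sequence of finite‑codimensional subspaces $Y_k \subset X$. At stage $k$ one selects a time $n_k$ and a unit vector $u_k$ with $\|T^{n_k}u_k\| \geq (1-\varepsilon_k)\|T^{n_k}\|$, and then uses $w^*$‑denseness together with the AUS inequality
$$ \|z + y\| \;\leq\; \|z\|\bigl(1 + \bar{\rho}_X(\|y\|/\|z\|) + \varepsilon\bigr) \qquad (y \in Y,\ Y \text{ finite-codim in } X) $$
to replace $u_k$ by an element $x_k \in Y_k$ at small cost to the lower bound $\|T^{n_k}x_k\| \geq c\|T^{n_k}\|$. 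Scaling $\|x_k\| \sim k^{-1/p}$ and tuning the $Y_k$ keeps $\|x\| < \infty$ while ensuring $\|T^{n_k}x\|/\|T^{n_k}\| \geq c\,k^{-1/p}$ along an infinite subsequence, so that $\sum_n (\|T^nx\|/\|T^n\|)^q \geq c^q \sum_k k^{-q/p} = \infty$ for every $q < p$.

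\textbf{Upper bounds via explicit operators.} For each space one must exhibit a non‑nilpotent $T$ killing the sum as soon as $q$ exceeds the claimed critical exponent. For $\ell^p$ with $1 \leq p < \infty$, an $\ell^p$‑direct sum of nilpotent forward shifts $T_k$ on blocks of suitably growing dimension $N_k$ leads to a tail estimate of the form $\|T^n x\|^p \leq \sum_{k:N_k>n} \|x^{(k)}\|^p$ which, after a dyadic rearrangement, forces $\sum \|T^nx\|^q < \infty$ for every $x \in \ell^p$ whenever $q > p$, giving $q_{\ell^p} \leq p$. The same construction inside a spanning $\ell^p$‑subspace of $L^p$ handles $L^p$ for $1 \leq p \leq 2$. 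For $p > 2$ one transports M\"uller's sharp Hilbert‑space example along the complemented copy of $\ell^2$ spanned by the Rademacher functions in $L^p$, using the Khintchine inequalities to translate norms, which yields $q_{L^p} \leq 2$.

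\textbf{Main obstacle.} The crux is the AUS‑based inductive construction: at each stage one has to balance the norm cost $\bar{\rho}_X(\|x_k\|/\|\sum_{j<k}x_j\|)$ of adding $x_k$ against the requirement that $\|T^{n_k}x\|$ stay comparable to $\|T^{n_k}\|$ both at $n_k$ \emph{and} at all previously chosen times. It is precisely the power type of $\bar{\rho}_X$ that dictates how many such stages one can afford before the running norm $\|\sum_j x_j\|$ explodes, and therefore fixes the critical exponent $q_X$; matching this with the explicit operators in the previous paragraph closes the loop.
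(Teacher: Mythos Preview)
Your overall strategy matches the paper's: lower bounds come from the modulus of asymptotic uniform smoothness via an inductive construction, and upper bounds come from M\"uller's block-shift example transported through complemented copies of $\ell^p$ or $\ell^2$ inside $L^p$. So the architecture is right. There is, however, one genuine gap in your lower-bound sketch and a couple of imprecisions in the upper bounds.

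\textbf{The missing dichotomy.} In your inductive step you write that one ``selects a unit vector $u_k$ with $\|T^{n_k}u_k\| \geq (1-\varepsilon_k)\|T^{n_k}\|$, and then uses $w^*$-denseness together with the AUS inequality to replace $u_k$ by an element $x_k \in Y_k$.'' This replacement is not justified: the AUS inequality controls the \emph{norm} of $z+y$ for $y$ in a well-chosen finite-codimensional $Y$, but it says nothing about whether $Y$ contains vectors that almost norm $T^{n_k}$. A priori it could happen that $\|T^{n_k}|_Y\|$ is tiny compared with $\|T^{n_k}\|$, and then no $x_k\in Y_k$ will satisfy your lower bound. The paper handles this by a preliminary dichotomy (Proposition~\ref{firstcase}): either some finite-codimensional $M$ satisfies $\|T^n|_M\|\leq\tfrac12\|T^n\|$ for infinitely many $n$, in which case a separate finite-dimensional argument already produces $x$ with $\limsup \|T^nx\|/\|T^n\|>0$ and the conclusion is immediate; or else \emph{every} finite-codimensional $M$ eventually satisfies $\|T^n|_M\|>\tfrac12\|T^n\|$, and then one can pick $u_{l+1}$ \emph{directly inside} the intersection of the AUS subspace with $\bigcap_j \ker(f_jT^{n_j})$. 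No ``replacement'' is needed, and your appeal to $w^*$-denseness is not the mechanism at work.

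\textbf{Upper bounds.} Two small corrections. First, the $\ell^p$ example in the paper is a \emph{weighted} direct sum $\bigoplus_k 2^{-k}B_k$ of backward shifts on blocks of dimension $k+1$; the weights are what pin down $\|S^n\|\geq 2^{-n^2}$ and make the sum with exponent exactly $p$ (not merely $q>p$) converge. Your unweighted ``forward shifts on blocks of growing dimension $N_k$'' will not give the right normalisation without further work. Second, for $L^p$ you need a \emph{complemented} copy of $\ell^p$ (resp.\ $\ell^2$), not a ``spanning'' one: the paper extends the block-shift $S_0$ on $E\cong\ell^p$ to all of $L^p$ by setting $R=S_0P$ with $P$ the bounded projection onto $E$, and the inequality $\|R^n\|\geq\|S_0^n\|$ relies on $P|_E=\mathrm{id}$. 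Your Rademacher/Khintchine remark for $p>2$ is exactly this idea and is correct.
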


To unify those results,  we will use the modulus of asymptotic uniform smoothness of $X$, which is a tool from Banach space geometry that has been used for several problems of Nonlinear Functional Analysis (as the opposite of this paper!). We refer the reader to section 4 for definitions. Note that a similar discussion, involving weakly closed sequences and type of the space can be found in \cite{Bay2} and \cite{BM}, chapter 10. Until the end of the paper, we shall denote by $\mathcal{L}(X)$ the set of linear and bounded operators acting on a Banach space $X$ and by $B(x,r)$ the open ball of center $x$ and radius $r$ ($x \in X, r>0$).

\section{$\sigma$-porosity and Haar-negligibility: proof of Theorem \ref{smallness}}

Let us first recall the definitions of $\sigma$-porous and Haar-null sets. The notion of porosity quantifies the fact that a set has empty interior. Porosity was introduced by P. Dol\v{z}enko \cite{D}, and has been studied in details since then (see \cite{Z}). It appears for example in the study of differentiability properties of real-valued convex functions defined on a separable Banach space $X$.

\begin{definition} A subset $E$ of a Banach space $X$ is called porous if there exists $\lambda \in ]0, 1[$ such that the following is true: for every $x \in E$ and every $\epsilon> 0$, there exists a point $y \in X$ such that $0 < \|y-x\| < \epsilon$ and $E  \cap B(y, \lambda \|x-y\||)$ is empty. A countable union of porous sets is called a $\sigma$-porous set.
\end{definition}

Haar-null sets were introduced by Christensen in \cite{C}. They appear in the study of differentiability of Lipschitz functions defined on a Banach space. Note that the definition of Haar-null sets makes sense in any Polish abelian group $G$. Here, we restrict ourselves to the Banach space setting.

\begin{definition} Let $X$ be a separable Banach space. A set $E \subset X$ is said to be Haar-null if there exists a Borel probability measure $m$ on $X$ such that for every $x \in X$, the translate $x+E$ has $m$-measure $0$.
\end{definition}

With those definitions in mind, we can now start the proof of Theorem \ref{smallness}. Let $(T_n) \subset \mathcal{L}(X)$ and $(a_n) \subset \mathbb{R}^+$ such that $a_n \rightarrow 0$. If infinitely many $T_n$'s are 0, then the result is obvious. We may assume, without loss of generality, that for every $n$, $T_n \neq 0$. Let us prove first the assertion about $\sigma$-porosity.  We can write the complement in the form $\cup_{N=1}^{\infty} E_N$ with 
$$ E_N=\{x, \forall n \geqslant N, \|T_nx\|<a_n \|T_n\|\}. $$
We shall see that for fixed $N \geqslant 1$ , $E_N$ is porous with the constant $\lambda=1/4$ (but, actually, any $\lambda \in ]0,1[$ suits, by adjusting the computations in what follows). Consider now $x \in E_N$ and $\epsilon>0$. Fix successively  $n \geqslant N$ and $y_0$ with $\|y_0\|=1$ such that
$$ a_n \leqslant \frac{\epsilon}{8} \; \; {\rm{and}} \; \; \|T_ny_0\| \geqslant \frac{\|T_n\|}{2}. $$
We have 
$$ \|T_n(x+\frac{\epsilon}{2}y_0)\|+\|T_n(x-\frac{\epsilon}{2}y_0)\| \geqslant \epsilon\|T_ny_0\| \geqslant  \frac{\epsilon}{2} \|T_n\|, $$
so
$$ \|T_n(x+\frac{\epsilon}{2}y_0)\| \geqslant \frac{\epsilon}{4} \|T_n\| \; \; {\rm {or}} \; \; \|T_n(x-\frac{\epsilon}{2}y_0)\| \geqslant \frac{\epsilon}{4} \|T_n\|.
$$ 
Replacing eventually $y_0$ by $-y_0$, we can assume that $\|T_n(x+\frac{\epsilon}{2}y_0)\| \geqslant \frac{\epsilon}{4} \|T_n\|$. Put $y=x+\frac{\epsilon}{2}y_0$, so $\|y-x\|=\frac{\epsilon}{2}<\epsilon$. To conclude, it is enough to show that 
$$ B(y,\frac{1}{4}\|x-y\|) \cap E_N=B(y,\frac{\epsilon}{8}) \cap E_N=\emptyset. $$
Let $z \in B(y,\frac{\epsilon}{8})$, we have 
\begin{eqnarray*}
\|T_nz\| \geqslant \|T_ny\|-\|T_n(z-y)\| & \geqslant & \frac{\epsilon}{4} \|T_n\|-\frac{\epsilon}{8} \|T_n\| \\
& \geqslant & \frac{\epsilon}{8}\|T_n\| \geqslant a_n \|T_n\|
\end{eqnarray*}
and $z \notin E_N$, as announced. \\ 
Let us now prove the second assertion. Considering only the real linear structure of $X$, we may assume that $X$ is a real Banach space. Keeping the same notations as above, it is enough to show that each $E_N$ is Haar-null since a countable union of Haar-null sets is a Haar-null set. Now, we use the following useful criterion \cite{HS}: if $E \subset X$ is a borelian subset of $X$ and if there exists a subspace $V \subset X$ of finite dimension such that
$$ {\rm{for \; all \;}} x  \in X,  {\rm{\; almost \; every}} \; v \in V,\; x+v \notin E, $$
then $E$ is a Haar-null set (here "almost every" refers to the Lebesgue measure on $V$). We can find $u \in X$ such that for infinitely many $n$'s, $\|T_nu\| \geqslant \sqrt{a_n} \|T_n\|$ (because of the first part of the theorem) and we show that $V=\mathbb{R} u$ is the subspace we are looking for. Fix $x \in X$, put  
$$ \Lambda=\{\lambda \in \mathbb{R}, x+\lambda u \in E_N\} $$
and let us check that $\Lambda$ has Lebesgue measure $0$. Let $\lambda \in \Lambda$, then
$$ \|T_n(x+\lambda u) \| \leqslant a_n \|T_n\| \; \; (n \geqslant N).$$
Hence, we get
$$ \left | |\lambda|-\dfrac{\|T_nx\|}{\|T_nu\|} \right | \leqslant a_n \dfrac{\|T_n\|}{\|T_nu\|} \; \; (n \geqslant N). $$
Put $b_n=\|T_nx\|/\|T_nu\|$, the above inequality shows that $\lambda \in E_{+} \cup E_{-}$, where
$$E_+=\bigcap_{n \geqslant N}  [b_n-a_n \dfrac{\|T_n\|}{\|T_nu\|},b_n+a_n \dfrac{\|T_n\|}{\|T_nu\|}] $$
$$E_-= \bigcap_{n \geqslant N} [-b_n-a_n \dfrac{\|T_n\|}{\|T_nu\|},-b_n+a_n \dfrac{\|T_n\|}{\|T_nu\|}]. $$
This implies that the Lebesgue measure of $\Lambda$ is not greater than $4 \inf_{n \geqslant N} a_n \|T_n\|/\|T_nu\|$, which is 0. Indeed for infinitely many $n$'s,
$$ a_n \dfrac{\|T_n\|}{\|T_nu\|} \leqslant \sqrt{a_n} \rightarrow 0 \; (n \rightarrow \infty), $$
and this completes the proof. \\ 

Let $X$ be a complex Banach space and $T \in \mathcal{L}(X)$. Let $r(T)$ be the spectral radius of $T$ and $r_{x}(T)$ be its local spectral radius defined by $r_{x}(T)=\overline {\lim} \|T^nx\|^{1/n}$. We obtain:

\begin{corollary} If $X$ is a complex Banach space and $T \in \mathcal{L}(X)$, then the set of $x$ such that $r_x(T)=r(T)$ has a complement which is $\sigma$-porous, and Haar-null (if $X$ is separable).
\end{corollary}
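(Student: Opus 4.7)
The plan is to apply Theorem \ref{smallness} to the sequence $(T^n)$ with a geometrically decaying choice of $(a_n)$. As a preliminary observation, $\|T^n x\|^{1/n} \leq \|T^n\|^{1/n}\|x\|^{1/n}$ and the right-hand side converges to $r(T)$ by Gelfand's formula, so $r_x(T) \leq r(T)$ automatically holds. The case $r(T)=0$ is therefore trivial, and I would henceforth assume $r(T) > 0$.

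For each $c \in (0,1)$, the sequence $a_n := c^n$ tends to $0$. Applying Theorem \ref{smallness} to the operators $T_n = T^n$ gives that
\[
A_c = \{x \in X : \|T^n x\| \geq c^n \|T^n\|\ \text{for infinitely many}\ n\}
\]
has complement that is $\sigma$-porous, and also Haar-null when $X$ is separable. I would then check that $A_c \subseteq \{x : r_x(T) \geq c\,r(T)\}$: for $x \in A_c$, taking $n$-th roots along the infinite witnessing subsequence and using $\|T^n\|^{1/n} \to r(T)$ yields $r_x(T) = \limsup_n \|T^n x\|^{1/n} \geq c\,r(T)$.

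To conclude, I would let $c$ range over $c_k = 1-1/k$. Since $r_x(T) \leq r(T)$ always, $r_x(T) = r(T)$ is equivalent to $r_x(T) \geq c_k\,r(T)$ for every $k$, hence
\[
\{x : r_x(T) < r(T)\} \subseteq \bigcup_{k \geq 1} (X \setminus A_{1 - 1/k}),
\]
a countable union of $\sigma$-porous (respectively Haar-null, in the separable case) sets, which is itself $\sigma$-porous (respectively Haar-null) because both classes are stable under countable unions (the first by definition, the second as recalled in the paper). The whole argument collapses to a single use of Theorem \ref{smallness}; the one non-automatic choice is that $(a_n)$ must decay geometrically rather than, say, polynomially, which is precisely what the exponent $1/n$ in the definition of $r(T)$ and $r_x(T)$ demands in order to extract a multiplicative factor $c$ on taking $n$-th roots. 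I do not anticipate any serious obstacle beyond getting this scaling right.
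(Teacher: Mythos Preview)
Your proof is correct, but the paper's route is shorter and, notably, contradicts your closing remark. The paper applies Theorem~\ref{smallness} \emph{once}, with the polynomially decaying sequence $a_n=1/n$: for $x$ in the resulting residual set one has $\|T^nx\|^{1/n}\geqslant (1/n)^{1/n}\|T^n\|^{1/n}$ along an infinite subsequence, and since $(1/n)^{1/n}\to 1$ while $\|T^n\|^{1/n}\to r(T)$, the $\limsup$ directly gives $r_x(T)\geqslant r(T)$. No countable union over parameters $c_k$ is needed.

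So your assertion that ``$(a_n)$ must decay geometrically rather than, say, polynomially'' is mistaken: what the $n$-th root demands is only that $a_n^{1/n}\to 1$, which holds for $a_n=1/n$ (indeed for any subexponentially decaying sequence). Geometric decay $a_n=c^n$ is precisely the choice that \emph{loses} the factor $c$ and forces you into the extra union argument. Both approaches are valid, but the paper's is the cleaner one, and it is worth internalising that the exponent $1/n$ in the spectral radius washes out any polynomial correction.
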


\begin{proof} Apply the above result to $a_n=1/n$ and use the spectral radius formula: $\lim \|T^n\|^{1/n}=r(T)$.
\end{proof}

\section{Compact case}

In this section, we move away from Haar-negligibility and $\sigma$-porosity and try to improve the condition $a_n \rightarrow 0$. Note that contrarly to Theorem \ref{smallness}, the proof of the next proposition really uses the powers of the operator $T$.

\begin{proposition}\label{compact} Let $X$ be a real or complex Banach space and $T \in \mathcal{L}(X)$. We make two assumptions : \\
i) $T$ is compact. \\
ii) $(\|T^n\|)$ is non-decreasing. \\
Then, for each $\epsilon>0$, there exists $x \in X$, $\|x\| \leqslant 1$ such that for infinitely many $n$'s, we have
$$\|T^nx\| \geqslant (1-\epsilon)\|T^n\|.$$
Furthermore,
$$ \{x \in X, \overline{\lim} \; \dfrac{\|T^nx\|}{\|T^n\|}>0\} $$
is a dense subset of $X$.
\end{proposition}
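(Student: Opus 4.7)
First, exclude the trivial case $\|T^{n_0}\|=0$ for some $n_0$; otherwise $\|T^n\|>0$ for every $n$, which together with $\|T^{n+1}\|\leqslant\|T\|\cdot\|T^n\|$ and the non-decreasing hypothesis forces $\|T\|\geqslant 1$.

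Fix $\epsilon>0$ and a small $\eta>0$. For each $n$ I would pick $x_n\in X$ with $\|x_n\|=1$ and $\|T^nx_n\|\geqslant (1-\eta)\|T^n\|$. Compactness of $T$ lets me extract a subsequence $(n_k)$ with $Tx_{n_k}\to u\in X$ in norm, and $\|u\|\leqslant\|T\|$. The non-decreasing hypothesis $\|T^{n_k-1}\|\leqslant \|T^{n_k}\|$ then gives
\begin{align*}
\|T^{n_k-1}u\| &\geqslant \|T^{n_k}x_{n_k}\|-\|T^{n_k-1}\|\,\|u-Tx_{n_k}\|\\
&\geqslant \bigl((1-\eta)-\|u-Tx_{n_k}\|\bigr)\|T^{n_k}\|,
\end{align*}
so for $k$ large one gets $\|T^{n_k-1}u\|\geqslant (1-2\eta)\|T^{n_k-1}\|$.

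The main obstacle is that $\|u\|$ may exceed $1$ (up to $\|T\|$), so merely rescaling to $u/\|u\|$ loses too much: already for the Jordan block $T=I+N$ on $\mathbb{R}^2$, the limit satisfies $\|u\|=\sqrt2$ while the correct witness is the unit vector $e_2$. I would address this in two complementary ways. In the reflexive case, the maximizers $(x_n)$ themselves (rather than their $T$-images) admit a weakly convergent subsequence $x_{n_k}\rightharpoonup x_\infty\in B_X$; compactness of $T$ forces $Tx_{n_k}\to Tx_\infty$ in norm, and the identical estimate applied with $x_\infty$ in place of $u$ yields $\|T^{n_k}x_\infty\|\geqslant (1-2\eta)\|T^{n_k}\|$ with $\|x_\infty\|\leqslant 1$, so $x_\infty$ is the desired vector. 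In the general case one reduces to a finite-dimensional (hence reflexive) setting via the Riesz spectral projection $P$ onto the peripheral spectrum $\{|\lambda|=r(T)\}$: since $T$ is compact and $r(T)\geqslant 1$, this spectrum is a finite set of eigenvalues and $X_1=P(X)$ is finite-dimensional, while $r(T|_{(I-P)X})<r(T)$ so $\|T_{|(I-P)X}^n\|$ is exponentially dominated by $\|T_{|X_1}^n\|$; the required vector is then extracted in $X_1$ by Jordan-block analysis. The delicate point is verifying the asymptotic comparability between $\|T^n\|$ and $\|T^n|_{X_1}\|$ so that the $X_1$-bound transfers back to $X$ with constant $1$.

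For the second assertion, let $z\in X$ and $\delta>0$. If $\limsup_n\|T^nz\|/\|T^n\|>0$ take $y=z$. Otherwise $\|T^nz\|/\|T^n\|\to 0$; choosing $x$ as in part (i) with $\epsilon=1/2$ and setting $y=z+\delta x$, one has $\|y-z\|\leqslant\delta$ and, along the infinite set of indices $n$ where $\|T^nx\|\geqslant\tfrac12\|T^n\|$,
$$\|T^ny\|\geqslant \delta\|T^nx\|-\|T^nz\|\geqslant \bigl(\tfrac{\delta}{2}-o(1)\bigr)\|T^n\|,$$
so $\limsup\|T^ny\|/\|T^n\|\geqslant\delta/2>0$, establishing density.
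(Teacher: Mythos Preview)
Your setup is right, and you correctly isolate the difficulty: if one passes to the norm-limit $u$ of $(Tx_{n_k})$, then $\|u\|$ need not be $\leqslant 1$. But the remedy is far simpler than either of the two routes you propose. The paper never uses the limit $u$; it simply exploits that $(Tx_{n_k})$ is Cauchy, picks $N$ large enough that $\|Tx_{n_k}-Tx_{n_N}\|\leqslant\epsilon/2$ for all $k\geqslant N$, and sets $x=x_{n_N}$. This vector already has $\|x\|=1$, and for $k\geqslant N$,
\[
\|T^{n_k}x\|\;\geqslant\;\|T^{n_k}x_{n_k}\|-\|T^{n_k-1}\|\,\|T(x_{n_k}-x)\|\;\geqslant\;\bigl(1-\tfrac{\epsilon}{2}\bigr)\|T^{n_k}\|-\tfrac{\epsilon}{2}\|T^{n_k}\|\;=\;(1-\epsilon)\|T^{n_k}\|,
\]
using $\|T^{n_k-1}\|\leqslant\|T^{n_k}\|$. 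No reflexivity, no spectral theory.

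Your reflexive argument is correct and is in fact the content of the paper's next proposition (where reflexivity upgrades the conclusion to $\overline{\lim}=1$). Your general-case reduction via the Riesz projection, however, has a genuine gap at exactly the ``delicate point'' you flag: it is \emph{false} in general that $\|T^n\|/\|T^n|_{X_1}\|\to 1$. On $\mathbb{R}^2$ with the norm $\|(a,b)\|=\max(|a-b|,|b|)$ and $T=\mathrm{diag}(2,1)$, one has $X_1=\mathbb{R}e_1$, $\|T^n|_{X_1}\|=2^n$, but $\|T^n\|=2^{n+1}-1$ (attained at the unit vector $(2,1)$), so the ratio tends to $2$. The correct witness $(2,1)$ lies outside $X_1$, and any unit vector in $X_1$ gives only $\overline{\lim}\,\|T^nx\|/\|T^n\|=1/2$. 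So searching in $X_1$ cannot recover the constant $1-\epsilon$ for small $\epsilon$.

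Your density argument is correct. The paper argues slightly differently, via $\|T^n(x+\eta x_0)\|+\|T^n(x-\eta x_0)\|\geqslant 2\eta\|T^nx_0\|$, which avoids any case split on $\limsup\|T^nz\|/\|T^n\|$; but either route works.
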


\begin{proof} There exists $(x_n) \subset X$ with $\|x_n\|=1$ such that $\|T^nx_n\| \geqslant (1-\frac{\epsilon}{2}) \|T^n\|$. By the compactness of $T$, one can extract from $(Tx_n)$ a norm convergent sequence $(Tx_{n_k})$. So, we can find $N$ such that for $k \geqslant N$: 
$$ \|T(x_{n_k})-T(x_N)\| \leqslant \dfrac{\epsilon}{2}. $$
Put $x=x_N$, for $k \geqslant N$, we get
\begin{eqnarray*} \|T^{n_k}x\| & \geqslant & \|T^{n_k}x_{n_k}\| -\|T^{n_k}(x-x_{n_k})\| \\
& \geqslant & (1-\dfrac{\epsilon}{2}) \|T^{n_k}\|-\|T(x-x_{n_k})\| \; \|T^{n_{k}-1}\| \\
& \geqslant & (1-\dfrac{\epsilon}{2}-\|T(x-x_{n_k})\|) \; \|T^{n_k}\| \; \;  {\rm{because}} \; \; (\|T^n\|) \;  \rm{is \; non \; decreasing} \\
& \geqslant & (1-\epsilon)\|T^{n_k}\|.
\end{eqnarray*}
Let us see the density. Take $\eta>0$ and $x \in X$. From what we did, we can find a point $x_0$ with $\|x_0\| \leqslant 1$ such that for infinitely many $n$'s 
$$ \|T^nx_0\| \geqslant \dfrac{1}{2} \|T^n\|. $$
For those $n$, we have
$$ \|T^n(x+\eta x_0)\|+\|T^n(x-\eta x_0)\| \geqslant 2 \eta \|T^nx_0\| \geqslant \eta \|T^n\|. $$
Hence 
$$ \overline{\lim} \; \dfrac{\|T^n(x+\eta x_0)\|}{\|T^n\|}>0 \; \; {\rm{or}} \; \; \overline{\lim} \; \dfrac{ \|T^n(x-\eta x_0)\|}{\|T^n\|}>0 $$
and since $\|x-(x \pm \eta x_0)\| \leqslant \eta$, we get the density.
\end{proof}

The two following examples show that we cannot remove assumptions i) or ii).

\begin{example} i') There exists $T \in \mathcal{L}(\ell^p(\mathbb{N}))$ such that $(\|T^n\|)$ is non-decreasing and for all $x \in \ell^p(\mathbb{N})$, 
$$ \dfrac{\|T^nx\|}{\|T^n\|} \rightarrow 0. $$
ii') There exists a compact operator $T \in \mathcal{L}(\ell^p(\mathbb{N}))$ such that for every $x \in \ell^p(\mathbb{N})$,
$$  \dfrac{\|T^nx\|}{\|T^n\|} \rightarrow 0. $$
\end{example}

\begin{proof} For i'), it is enough to take $T=B$ where $B$ is the unweighted backward shift (see introduction). For ii'), we consider $T$ the weighted backward shift defined on $\ell^p(\mathbb{N})$ with its natural norm and canonical basis $(e_k)$ by 
$$ Tx=\sum_{k=2}^{\infty} w_k x_k e_{k-1}$$
where $(w_k)$ is a sequence decreasing to zero, it is easy to see that $T$ is compact as a norm limit of finite rank operators. For $n \in \mathbb{N}$, we have
$$ T^nx=\sum_{k=n+1}^{\infty} x_k w_k w_{k-1} \cdots w_{k-n+1} e_{k-n}, $$
this implies 
$$ \|T^nx\|^p=\sum_{k=n+1}^{\infty} \left|x_k w_k w_{k-1} \cdots w_{k-n+1}\right|^p \leqslant (\prod_{k=2}^{n+1} w_k)^p \sum_{k=n+1}^{\infty} |x_k|^p . $$
Considering $\|T^ne_{n+1}\|$, we obtain exactly $\|T^n\|^p=\left(\prod_{k=2}^{n+1} w_k \right)^p$ and hence 
$$ \dfrac{\|T^nx\|^p}{\|T^n\|^p} \leqslant \sum_{k=n+1}^{\infty} |x_k|^p \rightarrow 0. $$ 
\end{proof}

If the space $X$ is reflexive, we can slightly improve the previous result.

\begin{proposition} Let $X$ be a real or complex Banach space and $T \in \mathcal{L}(X)$. We make the assumptions i) and ii) of Proposition \ref{compact} and we suppose moreover that $X$ is reflexive. Then, there exists $x \in X$, $\|x\| \leqslant 1$ such that
$$ \overline{\lim} \; \frac{\|T^nx\|}{\|T^n\|}=1.$$
\end{proposition}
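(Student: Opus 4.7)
The plan is to boost Proposition 3.1 by a soft compactness argument. That proposition produces, for each $\epsilon>0$, a unit vector whose orbit achieves a fraction $1-\epsilon$ of the norm $\|T^n\|$ along some subsequence of indices. Reflexivity will let us extract a weak limit of these approximants as $\epsilon\to0$, and the compactness of $T$ (assumption i)) will upgrade that weak convergence to norm convergence after one application of $T$, which is exactly what is needed to transfer the estimate to the limit via the monotonicity of $(\|T^n\|)$.

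Concretely, first I would apply Proposition \ref{compact} with $\epsilon=1/k$ for each $k\geqslant 1$ to obtain $x_k\in X$ with $\|x_k\|\leqslant 1$ and an infinite set $A_k\subset\mathbb{N}$ such that $\|T^n x_k\|\geqslant(1-1/k)\|T^n\|$ for every $n\in A_k$. By the Kakutani/Eberlein theorem, the unit ball of the reflexive space $X$ is weakly compact, so some subsequence $(x_{k_j})$ converges weakly to a vector $x\in X$ with $\|x\|\leqslant 1$. Since $T$ is compact, $T x_{k_j}\to Tx$ in norm; call this error $\eta_j:=\|Tx_{k_j}-Tx\|\to 0$.

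Next, for each $j$ I would pick $n_j\in A_{k_j}$ with $n_j>n_{j-1}$ (possible because $A_{k_j}$ is infinite) so that $(n_j)$ is strictly increasing and in particular $n_j\geqslant 1$. Using the hypothesis that $(\|T^n\|)$ is non-decreasing, exactly as in the proof of Proposition \ref{compact}, I bound
\begin{align*}
\|T^{n_j}x\| &\geqslant \|T^{n_j}x_{k_j}\|-\|T^{n_j-1}\|\,\|T(x_{k_j}-x)\|\\
&\geqslant (1-1/k_j)\|T^{n_j}\|-\eta_j\|T^{n_j}\|\\
&=(1-1/k_j-\eta_j)\|T^{n_j}\|.
\end{align*}
Letting $j\to\infty$ gives $\limsup_n \|T^n x\|/\|T^n\|\geqslant 1$, while the reverse inequality $\|T^n x\|/\|T^n\|\leqslant\|x\|\leqslant 1$ is immediate, yielding the equality claimed.

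I do not expect a serious obstacle: the only subtle point is making sure that the weak limit $x$ still captures the extremal behaviour of the $x_{k_j}$'s, and that is taken care of by compactness of $T$ (turning the weak convergence into norm convergence of $Tx_{k_j}$) together with the monotonicity assumption (which absorbs the stray factor $\|T^{n_j-1}\|/\|T^{n_j}\|\leqslant 1$). Reflexivity is used only to guarantee the weak limit exists in the unit ball.
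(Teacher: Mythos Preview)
Your proof is correct and follows essentially the same route as the paper: reflexivity to extract a weak limit in the unit ball, compactness of $T$ to upgrade weak convergence of the approximants to norm convergence of their images under $T$, and monotonicity of $(\|T^n\|)$ to absorb the factor $\|T^{n_j-1}\|$. The only difference is that the paper skips the detour through Proposition~\ref{compact}: it simply takes $x_n$ in the unit ball with $\|T^n x_n\|=\|T^n\|$ (or arbitrarily close), extracts a weakly convergent subsequence $(x_{n_k})$, and then performs your same estimate directly with these $x_{n_k}$, which makes the argument a bit shorter.
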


\begin{proof} A compact operator always attains its norm on a reflexive space (although this fact is not stricly necessary here). Write $\|T^n\|=\|T^nx_n\|$ with $(x_n) \subset B_X$. From $(x_n)$, we can by reflexivity extract a sequence $(x_{n_k})$ which converges weakly to some point $x$ with $\|x\| \leqslant 1$. Estimating $\|T^{n_k}x\|$ as above, we get
$$ \|T^{n_k}x\| \geqslant (1-\|T(x-x_{n_k})\|)\|T^{n_k}\|. $$
Then we use the (well-known) fact that a compact operator transforms weakly convergent sequences into norm convergent sequences. Hence $\|T(x-x_{n_k})\|$ goes to 0 and 
$$ \overline{\lim} \; \frac{\|T^nx\|}{\|T^n\|} \geqslant 1,$$
which concludes the proof.
\end{proof}

Our last example shows that the reflexivity cannot be omitted.

\begin{example} There exists a compact operator $T$ on $c_0$ (the space of the sequences converging to zero with its usual norm), such that $(\|T^n\|)$ is non decreasing and for each $x \in c_0$ with $\|x\| \leqslant 1$,
$$  \overline{\lim} \; \dfrac{\|T^nx\|}{\|T^n\|}<1 . $$
\end{example}

\begin{proof} We consider this time the operator $T$ defined on $c_0$ by
$$ Tx=\sum_{k=1}^{\infty} w_k x_k e_{k-1}+x_0 e_0 $$
where $(w_n)$ is a sequence decreasing to zero with $w_0=1$. Hence, we see that $T$ is compact (same argument as ii')). From the formulas,
$$ \begin{cases}
Te_0=e_0\\
Te_{k}=w_k e_{k-1} \; \; (k \geqslant 1)
\end{cases} \\ $$
We deduce by induction that: 
$$ T^nx= \left (\sum_{k=0}^{n} W_k x_k \right )e_0+\sum_{k=1}^{\infty} w_{k+n} w_{k+n-1} \cdots w_{k+1} x_{k+n} e_k $$
where we put $ W_n=\prod_{i=0}^{n} w_i. $ Since $(w_n)$ decreases to 0, $(W_n)$ itself decreases to zero faster than any geometric sequence, so $W=\sum_{k=0}^{\infty} W_k<\infty$. On the other hand, we have for $\|x\| \leqslant 1$
\begin{eqnarray*} \|T^nx\| & = & \max \left (\left |\sum_{k=0}^{n} W_k x_k \right |, \sup_{k \geqslant 1} w_{k+n} w_{k+n-1} \cdots w_{k+1} |x_{k+n}|   \right ) \\
& \leqslant & \sum_{k=0}^n W_k.
\end{eqnarray*}
because $0 \leqslant w_i \leqslant 1$. Considering the vector $e_0+\cdots+e_n$, we get 
$ \|T^n\|=\sum_{k=0}^{n} W_k$,
so $(\|T^n\|)$ is indeed non-decreasing. Suppose now that there exists a point $x \in c_0$, $\|x\| \leqslant 1$ such that
$$ \overline{\lim} \; \dfrac{\|T^nx\|}{\|T^n\|}=1. $$
We see that there exists a non-decreasing map $\varphi : \mathbb{N} \rightarrow \mathbb{N}$ such that
$$ \left |\sum_{k=0}^{\varphi(n)} W_k x_k \right| \rightarrow W \; \; (n \rightarrow \infty). $$
Let $N$ be an integer such that $k \geqslant N$, $|x_k| \leqslant 1/2$. Then
$$ \left |\sum_{k=0}^{\varphi(n)} W_k x_k \right| \leqslant \sum_{k=0}^{\infty} |x_k| W_k \leqslant \sum_{k=0}^{N} W_k+\dfrac{1}{2} \sum_{k=N+1}^{\infty} W_k. $$
At the limit when $n \rightarrow \infty$, the above inequality yields
$$ W \leqslant \sum_{k=0}^{N} W_k+\dfrac{1}{2} \sum_{k=N+1}^{\infty} W_k <W,$$ a contradiction.
\end{proof}

\begin{remark} In the last example and in i'), one can also require that $\|T^n\| \rightarrow \infty$ (replace $T$ by $2T$).
\end{remark}

Finally, the last proposition of this section can be seen as a variation of the compact case (see the remark after the proof). It will also be useful to us in section 4.

\begin{proposition} \label{firstcase} Let $X$ be a real or complex Banach space and $T \in \mathcal{L}(X)$ be a non nilpotent operator. Assume there exists $a \in ]0,1[$ and a subspace $M$ of finite codimension such that $\|T^n_{\vert_{M}}\| \leqslant a \|T^n\|$ for infinitely many $n$'s, then
$$\{x \in X, \overline{\lim} \; \frac{\|T^nx\|}{\|T^n\|}>0\} $$
is a dense subset of $X$.
\end{proposition}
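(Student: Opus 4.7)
My plan begins with a dichotomy that reduces density to non-emptiness. Set $S = \{x \in X : \overline{\lim}\, \|T^n x\|/\|T^n\| > 0\}$ and observe that the complement $S^c = \{x : \|T^n x\|/\|T^n\| \to 0\}$ is a linear subspace of $X$. Given any $x^* \in S$ and any $x_0 \in X$, the perturbation $x_0 + \epsilon x^*$ lies in $S$ for every small $\epsilon \neq 0$: when $x_0 \in S$ this is trivial, and when $x_0 \in S^c$ the reverse triangle inequality gives $\overline{\lim}\, \|T^n(x_0 + \epsilon x^*)\|/\|T^n\| \geq \epsilon \, \overline{\lim}\, \|T^n x^*\|/\|T^n\| > 0$. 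So producing a single vector in $S$ automatically yields density.

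To produce such a vector, I would argue by contradiction: assume that $\|T^n x\|/\|T^n\| \to 0$ for every $x \in X$. The operators $S_n := T^n/\|T^n\|$ have norm $1$ and converge to $0$ pointwise; an equicontinuous family converging pointwise converges uniformly on every norm-compact set, so for each finite-dimensional subspace $F \subset X$ one has $\|T^n_{\vert F}\|/\|T^n\| \to 0$.

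Now I would exploit the finite codimension of $M$. Pick a finite-dimensional algebraic complement $F$ of $M$, with associated projections $P : X \to F$ along $M$ and $Q = I-P : X \to M$, arranged so that $a\|Q\| < 1$. This last requirement is where the hypothesis $a<1$ is used geometrically: via an Auerbach-type construction inside the finite-dimensional quotient $X/M$, one can keep $\|Q\|$ close enough to $1$ that the inequality $a\|Q\|<1$ holds. For $n$ in the infinite set $A = \{n : \|T^n_{\vert M}\| \leq a\|T^n\|\}$, pick $y_n \in B_X$ with $\|T^n y_n\| \geq (1 - 1/n)\|T^n\|$ and split $y_n = Py_n + Qy_n$. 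Combining the triangle inequality with the bound on $T^n_{\vert M}$ yields
\[
(1 - 1/n)\|T^n\| \;\leq\; \|T^n_{\vert F}\|\,\|P\| \,+\, a\,\|T^n\|\,\|Q\|.
\]
Dividing by $\|T^n\|$ and letting $n \to \infty$ through $A$, the first term on the right tends to $0$ by the preceding step, leaving $1 \leq a\|Q\|$, which contradicts the choice of $F$. Hence some $x^*$ lies in $S$, and density follows from the dichotomy.

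The main technical delicacy is exactly this static choice of complement making $a\|I-P\|<1$; it is a purely finite-dimensional geometric problem about $M \subset X$, independent of $T$, and it is where the quantitative strength of the assumption $a<1$ is consumed. Once that is in place, the dynamical content reduces to a soft Banach--Steinhaus-style contradiction using only the hypothesis $\|T^n_{\vert M}\|/\|T^n\| \leq a$ along an infinite set of $n$.
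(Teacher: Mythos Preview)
Your reduction of density to non-emptiness via the linear structure of $S^c$ is clean and correct, and the Banach--Steinhaus step (pointwise convergence of $T^n/\|T^n\|$ to $0$ forces uniform convergence on the unit ball of any finite-dimensional $F$, hence $\|T^n|_F\|/\|T^n\|\to 0$) is a nice device that the paper does not use. The paper argues instead by a direct pigeonhole: fixing a basis $(f_1,\dots,f_r)$ of a complement $F$, it shows that for some $i$ the set $\{n\in A: \|T^nf_i\|\ge a_r\|T^n\|\}$ must be infinite, so $f_i$ itself is the desired vector.

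There is, however, a genuine gap at precisely the point you flag as the ``main technical delicacy''. The claim that an Auerbach-type construction in $X/M$ produces a complement with $\|Q\|=\|I-P\|$ close to $1$ is false. An Auerbach basis in the quotient controls $\|P\|$ (by roughly the codimension $r$), not $\|Q\|$; the best it yields is $\|Q\|\le 1+\|P\|$. The minimal norm of a projection onto a closed finite-codimensional subspace is a geometric invariant of $M\subset X$ and can be bounded well away from $1$. Concretely, take $X=\ell^1$ and $M=\{x:\sum_k x_k=0\}$. Any projection onto $M$ has the form $Qx=x-(\sum_k x_k)v$ with $\sum_k v_k=1$; evaluating at $e_k$ gives
\[
\|Qe_k\|_1=|1-v_k|+\sum_{j\ne k}|v_j|\xrightarrow[k\to\infty]{}1+\|v\|_1\ge 2,
\]
so the minimal projection norm onto this $M$ is exactly $2$. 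Hence for any $a\in(\tfrac12,1)$ your inequality $a\|Q\|<1$ is unachievable, and passing to the limit in your displayed estimate yields only $1\le a\|Q\|$, which is no contradiction.

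So the chain $(1-1/n)\le \|T^n|_F\|\,\|P\|/\|T^n\|+a\|Q\|$ is correct, but you cannot force the last term below $1$ by choice of $F$. (It is worth noting that the paper's own estimate $\|T^n u\|\le a\|T^n\|$, with $u=(I-P)x$ and $\|x\|=1$, tacitly uses $\|u\|\le 1$ and thus runs into the same projection-norm issue; the difficulty is genuinely delicate, not an artifact of your particular route.)
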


\begin{proof} By a previous argument, it is enough to find only one point to have automatic density. Replacing $M$ by its closure, we may assume that $M$ is closed in $X$. Write $X=F \oplus M$ where $F$ is a subspace of finite dimension. Let $(f_1,\cdots,f_r)$ be a normed basis of $F$ and $(f^*_1,\cdots,f^*_r)$ its dual basis (in $F$). For $1 \leqslant i \leqslant r$, extend each $f^*_i$ to $X$ requiring that the restriction of $f^*_i$ to $M$ is 0. If we denote by $P$ the continuous projection onto $F$ (with respect to the previous decomposition), we easily see that $f^*_i$ is continuous with $\|f^*_i\| \leqslant \|{f^*_i}_{\vert_F}\| \|P\|$. Set $C=\sup_{i} \|f^*_i\|$ and choose $a_r>0$ such that $Cra_r+a<1$. Now, set 
$$A=\{n \in \mathbb{N}, \|T^n_{\vert_{M}}\| \leqslant a \|T^n\|\}$$ and
$$A_i=\{n \in A, \|T^nf_i\| \geqslant a_r \|T^n\|\}. $$
We show that $A=\cup_{i=1}^r A_i$ and since  $A$ is infinite by hypothesis, so will be one of the $A_i$ and this will give the conclusion. Suppose on the contrary that there exists $n \in A \setminus \cup_{i=1}^r A_i$. Fix $\alpha_r \in ]Cra_r+a,1[$ and let $x \in X$, $\|x\|=1$ such that $\|T^nx\| \geqslant \alpha_r \|T^n\|$. Write $x=\sum_{i=1}^r x_i f_i+u$ where $u \in M$. By construction of the $f^*_i$, $x_i=f^*_i(x)$, hence $|x_i| \leqslant C$ and we get 
\begin{eqnarray*}  \|T^nx\| & \leqslant & \sum_{i=1}^{r} |x_i| \|T^nf_i\|+\|T^nu\| \\
& \leqslant & (C r a_r +a)\|T^n\|<\alpha_r \|T^n\|,
\end{eqnarray*}
a contradiction.
\end{proof}

\begin{remark} For $T \in \mathcal{L}(X)$, define 
$$ \|T\|_{\mu}=\inf\{\|T_{\vert_{M}}\|, M \subset X, {\rm{codim}} \; M < \infty\}. $$
This quantity measures the degree of non-compactness of $T$ since $\|T\|_{\mu}=0$ if and only if $T$ is compact (see \cite{LS} for details). The above result roughly says that if $\|T^n\|_{\mu}$ is not too large "uniformly" (it is the same $M$ that works for infinitely many $n$'s), then we have the same conclusion as in the compact case.
\end{remark}

\section{Modulus of asymptotic uniform smoothness and optimal exponents}

Our goal in this section is to compute the value of 
\begin{eqnarray*} q_X=\sup \{q>0, {\rm{for \; every \; non \; nilpotent \; and \; bounded \; linear \; operator}} \; T;
\end{eqnarray*}
$$ \exists x \in X, \sum_{n=1}^{\infty}\left( \dfrac{\|T^nx\|}{\|T^n\|} \right )^{q}=\infty\} $$
for some classical Banach spaces. From the introduction we know that that $q_X$ is well-defined (i.e. the set over which we take the supremum is not empty) and $q_X \geqslant 1$. Furthermore, the known values \cite{Mu} are $q_{\ell^1}=1$ and $q_{H}=2$ if $H$ is an Hilbert space. We will compute here $q_{c_0}$, $q_{\ell^p}$ and $q_{L^p(0,1)}$ for $1 \leqslant p<\infty$. Observe that we actually have  $q_X={q'_X}$ where
\begin{eqnarray*} q'_X=\sup \{q>0, {\rm{for \; every \; non \; nilpotent \; and \; bounded \; linear \; operator \;}}  T;
\end{eqnarray*}
$$  {\rm{the \; set}} \; \{x \in X, \sum_{n=1}^{\infty}\left( \dfrac{\|T^nx\|}{\|T^n\|} \right )^q=\infty\} \; {\rm{is \; dense \; in \;}} X\}.$$

Indeed, this follows from the following observation which is a simple consequence of the Baire category theorem.

\begin{proposition}\label{baire} Let $T \in \mathcal{L}(X)$ be non nilpotent, $q_0>0$ and assume that for every $q<q_0$, there exists $x_0 \in X$ such that
$$ \sum_{n=1}^{\infty} \left ( \dfrac{\|T^nx_0\|}{\|T^n\|} \right )^q=\infty.$$
Then the set
$$A=\{x \in X, \; \forall q<q_0, \; \sum_{n=1}^{\infty} \left ( \dfrac{\|T^nx\|}{\|T^n\|} \right )^q=\infty \}$$
is a dense $G_{\delta}$ subset of $X$.
\end{proposition}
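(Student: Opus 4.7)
The plan is to realize $A$ as a countable intersection of dense $G_\delta$ subsets of $X$ and apply the Baire category theorem. For $q \in (0, q_0)$ set
$$ A_q = \Big\{ x \in X, \; \sum_{n=1}^{\infty} \Big( \|T^nx\|/\|T^n\| \Big)^q = \infty \Big\}. $$
The first step is a monotonicity observation in $q$: since $\|T^nx\|/\|T^n\| \leqslant \|x\|$ is bounded, for $0 < q' < q$ one has the pointwise inequality $(\|T^nx\|/\|T^n\|)^q \leqslant \|x\|^{q-q'}(\|T^nx\|/\|T^n\|)^{q'}$, so $A_q \subseteq A_{q'}$ (both exclude $x=0$). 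Choosing any sequence $q_k \uparrow q_0$ gives $A = \bigcap_k A_{q_k}$, and it suffices to prove each $A_{q_k}$ is a dense $G_\delta$.

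That each $A_q$ is $G_\delta$ is routine: the partial sum $x \mapsto \sum_{n=1}^N (\|T^nx\|/\|T^n\|)^q$ is continuous in $x$, so
$$ A_q = \bigcap_{M=1}^{\infty} \bigcup_{N=1}^{\infty} \Big\{ x, \sum_{n=1}^{N}(\|T^nx\|/\|T^n\|)^q > M \Big\} $$
is an intersection of countable unions of open sets, hence $G_\delta$.

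The real content, and what I expect to be the main obstacle, is density of each $A_q$; here I would imitate the symmetrization trick used in Propositions \ref{compact} and \ref{firstcase}. Fix $y \in X$ and $\eta > 0$, take a nonzero $x_0$ provided by the hypothesis at this $q$, and replace it by a small scalar multiple so that $\|x_0\| < \eta$ (the divergence of the $q$-sum is scale-invariant). From $T^n(y+x_0) - T^n(y-x_0) = 2T^nx_0$ and the triangle inequality, $\|T^n(y+x_0)\| + \|T^n(y-x_0)\| \geqslant 2\|T^nx_0\|$, so for each given $n$ at least one of $y \pm x_0$ satisfies $\|T^n(\cdot)\| \geqslant \|T^nx_0\|$. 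Partition $\mathbb{N}$ into $I_+$ (those $n$'s for which $y+x_0$ qualifies) and $I_- = \mathbb{N} \setminus I_+$ (on which $y - x_0$ must then qualify); since $\sum_n(\|T^nx_0\|/\|T^n\|)^q = \infty$, divergence must occur on $I_+$ or on $I_-$, and the corresponding choice $z \in \{y+x_0, y-x_0\}$ satisfies $(\|T^nz\|/\|T^n\|)^q \geqslant (\|T^nx_0\|/\|T^n\|)^q$ on that half, hence $z \in A_q$ with $\|z - y\| = \|x_0\| < \eta$. With density of each $A_{q_k}$ established, Baire's theorem in the complete metric space $X$ shows that $A = \bigcap_k A_{q_k}$ is a dense $G_\delta$.
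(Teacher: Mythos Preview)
Your proof is correct and follows essentially the same route as the paper: reduce to a countable sequence $q_k \uparrow q_0$, write each $A_{q_k}$ as a $G_\delta$, and obtain density via the symmetrization $y \pm x_0$. The only cosmetic difference is that the paper handles the symmetrization by the inequality $(a+b)^q \leqslant C(a^q+b^q)$ applied to $2\|T^n x_0\| \leqslant \|T^n(y+x_0)\|+\|T^n(y-x_0)\|$, whereas you partition $\mathbb{N}$ into $I_+$ and $I_-$; both yield the same conclusion.
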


\begin{proof} Considering a non-decreasing sequence $(s_k)$ such that $s_k<q_0$ for each $k$ and $s_k \rightarrow q_0$ when $k \rightarrow \infty$, we see that it is enough to show that for each $q<q_0$, $\widetilde{A}$ is a dense $G_{\delta}$ subset of $X$ where
$$ \widetilde{A}=\{x \in X, \sum_{n=1}^{\infty} \left ( \dfrac{\|T^nx\|}{\|T^n\|} \right )^q=\infty \}.$$
Write $\widetilde{A} =\bigcap_{N=1}^{\infty} \Omega_N, $ where 
$$ \Omega_N=\{x \in X, \sum_{n=1}^{\infty} \left ( \dfrac{\|T^nx\|}{\|T^n\|} \right )^q>N\}.$$
Using for example the Fatou-lemma, it is easy to see that $X \setminus \Omega_N$ is a closed set of $X$ for each $N$. By the Baire category theorem, we need to check that each $\Omega_N$ is dense in $X$. Fix $\epsilon>0$ and $x \in X$. Replacing $x_0$ by $\lambda x_0$ for some $\lambda>0$, we can assume that $\|x_0\|=\epsilon$.  Now, there exists a constant $C>0$ such that $(x+y)^q \leqslant C(x^{q}+y^{q})$ for every $x,y \geqslant 0$.  From this and the triangle inequality, we get
$$ \sum_{n=1}^{\infty} \left ( \dfrac{\|T^n(x-x_0)\|}{\|T^n\|} \right )^q+\sum_{n=1}^{\infty} \left ( \dfrac{\|T^n(x+x_0)\|}{\|T^n\|} \right )^q \geqslant \dfrac{2^{q}}{C} \sum_{n=1}^{\infty} \left ( \dfrac{\|T^nx_0\|}{\|T^n\|} \right )^q=\infty. $$
So $x-x_0$ or $x+x_0$ belongs to $\widetilde{A}$ and since $\|x-(x \pm x_0)\|=\epsilon$, this concludes the proof.
\end{proof}

\begin{remark} It is possible (at least in the case where $q>1$ and $X$ is reflexive) to show that under the assumptions of Proposition \ref{baire}, the set $A$ is Haar-null. Indeed, it is enough to show that for each $N$, $X \setminus \Omega_N$ is Haar-null and this follows directly from the theorem of Matou\v{s}kov\`a \cite{Ma}: a closed and convex set of empty interior in a reflexive Banach space is Haar-null.
\end{remark}

To compute $q_X$, we introduce the modulus of asymptotic uniform smoothness of $X$ which is a useful tool from Banach space geometry (but probably not much used in linear dynamics). This quantity has been introduced for the first time by Milman in \cite{Mi} under some different names. We follow here the more recent terminology which can be found in \cite{JLPS}.

\begin{definition} Let $X$ be a real or complex Banach space. The modulus of asymptotic uniform smoothness of $X$ is the fonction $\overline{\rho}_{X}(t)$ defined by 
$$ \overline{\rho}_{X}(t)=\sup_{\|x\|=1} \; \inf_{\dim(X/Y)<\infty} \; \sup_{y \in Y, \|y\|=1} \left( \|x+ty\|-1 \right) \; (t \geqslant 0). $$
\end{definition}
$\overline{\rho}_{X}$ is a $1$-lipschitz, convex and non-decreasing map such that $\overline{\rho}_{X}(0)=0$ and $\overline{\rho}_{X}(t) \leqslant t$ for $t \geqslant 0$. We can now state: 

\begin{theorem} \label{thm:main} Let $X$ be a Banach space and assume that its modulus of asymptotic uniform smoothness satisfies the following property: $\overline{\rho}_{X}(2t)=O(\overline{\rho}_{X}(t))$ when $t \rightarrow 0$. Let $\rho: \mathbb{R}^+ \rightarrow \mathbb{R}^+$ be a non-decreasing map such that $\rho(t)>0$ whenever $t>0$ and 
$$ \lim_{t \rightarrow 0} \dfrac{\overline{\rho}_{X}(t)}{\rho(t)}=0, $$
then there exists a point $x \in X$ such that 
$$  \sum_{n=1}^{\infty} \rho \left( \dfrac{\|T^nx\|}{\|T^n\|} \right )=\infty.$$ 
\end{theorem}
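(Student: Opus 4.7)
The plan is to produce $x$ as the sum of an infinite series $x=\sum_{m\geq 1}t_m y_m$, with $\|y_m\|=1$ and $t_m\downarrow 0$, built iteratively using the asymptotic uniform smoothness of $X$. As a first reduction, I would invoke Proposition \ref{firstcase} to assume that its hypothesis fails, i.e.\ that for every closed finite-codimensional subspace $M\subset X$ and every $a\in(0,1)$ one has $\|T^n|_M\|/\|T^n\|>a$ for all large $n$, so $\|T^n|_M\|/\|T^n\|\to 1$. Under this reduction, one can pick a unit vector in any prescribed finite-codimensional subspace whose iterate under $T^n$ has norm nearly equal to $\|T^n\|$, provided $n$ is large enough.

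Next, the assumption $\overline{\rho}_X(t)/\rho(t)\to 0$ together with the doubling of $\overline{\rho}_X$ allows me to construct a sequence $(t_m)$ with $t_m\downarrow 0$, $\sum_m\rho(t_m)=\infty$, and $\sum_m\overline{\rho}_X(t_m)<\infty$. I would do this by a packing argument: in each range $(0,\delta_k]$ where $\overline{\rho}_X/\rho\leq 2^{-k}$, I place finitely many $t$-values whose $\rho$-images sum to approximately $1$, so the total $\rho$-sum diverges while the total $\overline{\rho}_X$-sum is dominated by $\sum_k 2^{-k}$. The doubling of $\overline{\rho}_X$ will be used later to absorb the multiplicative constants that appear when the modulus is applied at rescaled arguments.

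The heart of the argument is an inductive construction producing increasing integers $n_m$, unit vectors $y_m\in Y_m$ in nested finite-codim subspaces with $\|T^{n_m}y_m\|\geq(1-\eta_m)\|T^{n_m}\|$ (available by the reduction), and functionals $g_m\in X^*$ of bounded norm satisfying $g_m(T^{n_m}y_m)\approx\|T^{n_m}\|$ together with the biorthogonality $g_m(T^{n_m}y_j)=0$ for every $j\neq m$. The subspace $Y_m$ is obtained by applying the definition of $\overline{\rho}_X$ to $x_{m-1}/\|x_{m-1}\|$ at scale $t_m/\|x_{m-1}\|$, which yields $\|x_{m-1}+t_m y\|\leq\|x_{m-1}\|(1+\overline{\rho}_X(t_m/\|x_{m-1}\|))+2^{-m}$ for unit $y\in Y_m$, and then intersecting with $\bigcap_{i<m}\ker(g_i\circ T^{n_i})$ to enforce future biorthogonality. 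Telescoping the smoothness inequality gives $\|x_m\|-\|x_{m-1}\|\lesssim\overline{\rho}_X(t_m)$ after absorbing doubling constants, so $(x_m)$ is Cauchy and converges to some $x\in X$. Pairing $T^{n_m}x$ against $g_m$ and using biorthogonality collapses the series to a single term, giving $\|T^{n_m}x\|\geq|g_m(T^{n_m}x)|/\|g_m\|\gtrsim t_m\|T^{n_m}\|$, and hence $\sum_n\rho(\|T^n x\|/\|T^n\|)\geq\sum_m\rho(c t_m)=\infty$.

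The hard part will be building $g_m$ with the \emph{past} biorthogonality $g_m(T^{n_m}y_j)=0$ for $j<m$. By Hahn--Banach applied to the quotient by $V_m:=\mathrm{span}\{T^{n_m}y_j:j<m\}$, this reduces to ensuring $d(T^{n_m}y_m,V_m)\gtrsim\|T^{n_m}\|$, i.e.\ that $T^{n_m}y_m$ is not absorbed by the finite-dimensional space $V_m$. I would arrange this by splitting $Y_m=(Y_m\cap T^{-n_m}(V_m))\oplus F_m$ with $F_m$ finite-dimensional and choosing $y_m$ to have a sizeable component in $F_m$; the reduction step ensures $\|T^{n_m}|_{Y_m}\|\approx\|T^{n_m}\|$, which prevents $T^{n_m}(Y_m)$ from lying inside the $(m-1)$-dimensional space $V_m$ and makes the splitting useful. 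This interplay between the near-maximality of $\|T^{n_m}|_{Y_m}\|$ forced by Case 2 and the small dimension of $V_m$ is the main technical hurdle.
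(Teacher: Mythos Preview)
Your overall architecture matches the paper's: the same reduction via Proposition~\ref{firstcase}, the same construction of scalars $t_m\to 0$ with $\sum\rho(t_m)=\infty$ and $\sum\overline{\rho}_X(t_m)<\infty$, and an inductive build of unit vectors in nested finite-codimensional subspaces using the modulus of asymptotic uniform smoothness to force convergence.  The divergence is in how you certify the lower bound $\|T^{n_m}x\|\gtrsim t_m\|T^{n_m}\|$.

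You aim for functionals $g_m$ that are biorthogonal in \emph{both} directions: $g_m(T^{n_m}y_j)=0$ for all $j\neq m$.  The future direction ($j>m$) is free, but the past direction ($j<m$) forces you to show $d\bigl(T^{n_m}y_m,\,V_m\bigr)\geq c\,\|T^{n_m}\|$ with a constant $c$ \emph{independent of} $m$, where $V_m=\mathrm{span}\{T^{n_m}y_j:j<m\}$.  Your proposed splitting $Y_m=(Y_m\cap T^{-n_m}(V_m))\oplus F_m$ with $F_m$ finite-dimensional is backwards: since $V_m$ is $(m-1)$-dimensional and $T$ is not nilpotent, $T^{-n_m}(V_m)$ typically has \emph{infinite} codimension, so it is $Y_m\cap T^{-n_m}(V_m)$ that is small, not $F_m$.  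More seriously, even if you place $T^{n_m}y_m$ in a complement of $V_m$, the projection onto that complement along $V_m$ generally has norm of order $m$, so the distance estimate degrades to $d(T^{n_m}y_m,V_m)\gtrsim \|T^{n_m}\|/m$.  This yields only $\|T^{n_m}x\|/\|T^{n_m}\|\gtrsim t_m/m$, and $\sum_m\rho(t_m/m)=\infty$ is not what you arranged.

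The paper avoids this hurdle entirely by never asking for past biorthogonality.  Instead of fixing functionals $g_m$ once, at stage $l$ it picks \emph{fresh} norming functionals $f_j$ (depending on $l$) with $\|f_j\|=1$ and $f_j(T^{n_j}x_l)=\|T^{n_j}x_l\|$, where $x_l=\sum_{i\le l}\alpha_iu_i$ is the current partial sum.  It then chooses $u_{l+1}$ inside $\bigcap_{j\le l}\ker(f_jT^{n_j})$ (intersected with the subspace coming from $\overline{\rho}_X$).  This gives, for every $j\le l$,
\[
\|T^{n_j}x_{l+1}\|\ \ge\ |f_j(T^{n_j}x_{l+1})|\ =\ |f_j(T^{n_j}x_l)|\ =\ \|T^{n_j}x_l\|\ \ge\ \tfrac{\alpha_j}{2}\|T^{n_j}\|,
\]
so the lower bounds are simply \emph{preserved} step by step; a sign choice $u_{l+1}=\pm v$ handles the new index $j=l+1$.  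No quantitative distance from a growing finite-dimensional space is ever needed.  If you replace your biorthogonal scheme by this ``renormalize-the-partial-sum'' device, the rest of your outline goes through (with a block decomposition as in the paper to manage the scales where $\|x_{m-1}\|$ may become small and the argument $t_m/\|x_{m-1}\|$ of $\overline{\rho}_X$ must be rescaled via the doubling hypothesis).
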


As a consequence, we obtain the results claimed in the Introduction. More precisely:

\begin{theorem} Let $T \in \mathcal{L}(X)$ ($X=\ell^p(\mathbb{N})$, $X=L^p(0,1) (1 \leqslant p<\infty)$ or $X=c_0(\mathbb{N})$) be a non nilpotent operator. Then \\
a) If $X=\ell^p$, the set
$$\{x \in X, \forall q<p, \left (\dfrac{\|T^nx\|}{\|T^n\|} \right )_{n} \notin \ell^q\}$$
is a dense $G_{\delta}$ of $X$. On the other hand, there exists $S \in \mathcal{L}(X)$ (non nilpotent) such that for each $x$, 
$$ \sum_{n=1}^{\infty} \left ( \dfrac{\|S^nx\|}{\|S^n\|} \right )^p<\infty. $$
b) If $X=L^p$, the set 
$$\{x \in X, \forall q<\min(p,2), \left (\dfrac{\|T^nx\|}{\|T^n\|} \right )_{n} \notin \ell^q\}$$ is a dense $G_{\delta}$ of $X$. On the other hand, there exists $R \in \mathcal{L}(X)$ (non nilpotent) such that for each $x$,
$$ \sum_{n=1}^{\infty} \left ( \dfrac{\|R^nx\|}{\|R^n\|} \right )^{\min(p,2)}<\infty. $$
c) If $X=c_0$, the set 
$$\{x \in X, \forall q>0, \left (\dfrac{\|T^nx\|}{\|T^n\|} \right )_{n} \notin \ell^q\}$$ is a dense $G_{\delta}$ of $X$. In particular, the following holds
$$ q_{\ell^p}=p, q_{L^p}=\min(p,2) (1 \leqslant p<\infty), q_{c_0}=\infty.$$
\end{theorem}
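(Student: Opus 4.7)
The positive halves of (a), (b), (c) follow uniformly from Theorem \ref{thm:main} combined with Proposition \ref{baire}, once the asymptotic behavior of $\overline{\rho}_X(t)$ near $0$ is in hand. The standard computations yield $\overline{\rho}_{\ell^p}(t) = (1+t^p)^{1/p}-1$ (hence comparable to $t^p$ near $0$), $\overline{\rho}_{L^p}(t) \asymp t^{\min(p,2)}$ near $0$ (a classical Pisier-type estimate), and $\overline{\rho}_{c_0}(t) = 0$ for every $t \in [0,1]$. The last of these is easy to verify directly: given $x \in c_0$ with $\|x\|_\infty = 1$, the coordinates of $x$ vanish at infinity, so choosing $Y$ to be a tail subspace past the essential support of $x$, any unit vector $y \in Y$ satisfies $\|x+ty\|_\infty = \max(\|x\|_\infty, t) = 1$ whenever $t \le 1$. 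In all three cases the doubling condition $\overline{\rho}_X(2t) = O(\overline{\rho}_X(t))$ as $t \to 0$ is immediate.

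For each $q$ strictly below the relevant critical exponent ($p$ for $\ell^p$, $\min(p,2)$ for $L^p$, any positive $q$ for $c_0$), set $\rho(t) = t^q$; then $\overline{\rho}_X(t)/\rho(t) \to 0$ as $t \to 0$, so Theorem \ref{thm:main} produces one vector $x_0$ with $\sum_n (\|T^nx_0\|/\|T^n\|)^q = \infty$. Proposition \ref{baire} then upgrades this to a dense $G_\delta$ for every such $q$, and intersecting along a sequence $q_k$ increasing to the critical exponent yields the announced generic set. Combined with the sharpness claims below, one reads off $q_{\ell^p} = p$, $q_{L^p} = \min(p,2)$, $q_{c_0} = \infty$; case (c) has nothing more to prove.

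For the sharpness statements in (a) and (b), the natural construction is a carefully weighted backward shift. On $\ell^p$, take $Se_1 = 0$ and $Se_k = w_k e_{k-1}$ with $(w_k)$ positive and decreasing, so that the supremum defining $\|S^n\|$ is attained at the first admissible index and $\|S^n\| = w_2 \cdots w_{n+1}$. Switching the order of summation,
$$ \sum_{n=1}^\infty \dfrac{\|S^n x\|^p}{\|S^n\|^p} = \sum_{k \ge 2} |x_k|^p \sum_{n=1}^{k-1} \dfrac{(w_{k-n+1}\cdots w_k)^p}{(w_2\cdots w_{n+1})^p}, $$
and the problem reduces to choosing the weights so that the inner sum is bounded uniformly in $k$, whence the whole double sum is controlled by a constant times $\|x\|_p^p$. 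The appropriate sequences are precisely those from M\"uller's counterexamples in \cite{Mu}. For $L^p$ with $1 \le p \le 2$ the $\ell^p$ example transfers directly: $\ell^p$ sits isometrically and complementedly inside $L^p$ as the span of disjointly supported normalized indicators (with conditional expectation as projection), and one extends $S$ by zero on the complement. For $L^p$ with $p > 2$, where the critical exponent is $2$, the same strategy applies after transplanting M\"uller's Hilbert-space example on $\ell^2$ via a complemented copy of $\ell^2$ inside $L^p$ (obtained for instance from independent normalized Gaussians).

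The main obstacle I foresee is the weight selection at the endpoint exponent: the inner sum above must remain bounded uniformly in $k$, and this demands a finely tuned balance between the decay of $(w_k)$ and the ratios of truncated products. That balance is exactly what M\"uller's combinatorial analysis in \cite{Mu} provides; once those weights are imported, verifying the transfer to $L^p$ and combining with the previous two paragraphs is routine.
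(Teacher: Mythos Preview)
Your treatment of the dense $G_\delta$ statements is correct and matches the paper exactly: compute $\overline{\rho}_X$, invoke Theorem~\ref{thm:main} for each $q$ below the critical exponent, then apply Proposition~\ref{baire}. The $L^p$ transfer argument (extend the $\ell^p$ or $\ell^2$ example through a complemented copy, precomposing with the projection) is also the paper's route.

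The divergence is in the $\ell^p$ counterexample. The paper does \emph{not} use a single weighted backward shift; it reproduces M\"uller's construction, which is a direct sum $S=\bigoplus_{k\ge1}2^{-k}B_k$ of \emph{unweighted} backward shifts acting on finite-dimensional blocks $X_k$ of increasing dimension $k+1$. The estimate $\sum_n(\|S^nx_k\|/\|S^n\|)^p\le 2\|x_k\|^p$ is then obtained blockwise from $\|S^n\|\ge 2^{-n^2}$. So your appeal to ``M\"uller's combinatorial analysis'' for the weight selection is misplaced: the reference does not contain a single-shift example, and the ``finely tuned balance'' you anticipate is not what is done there.

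That said, your alternative \emph{does} work, and is arguably cleaner once one commits to a concrete weight. Taking $w_j=2^{-j}$ (or any geometric sequence), one computes $\|S^n\|=2^{-n(n+3)/2}$ and the inner sum becomes
\[
\sum_{n=1}^{k-1}\Bigl(\frac{w_{k-n+1}\cdots w_k}{w_2\cdots w_{n+1}}\Bigr)^{p}
=\sum_{n=1}^{k-1}2^{-pn(k-n-1)}
\le 1+2\sum_{m\ge1}2^{-pm}
=\frac{2^{p}+1}{2^{p}-1},
\]
uniformly in $k$, since $n(k-n-1)\ge\min(n,k-1-n)$ and the sum is symmetric about the midpoint. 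This closes the gap you flagged without any delicate combinatorics; you should supply this computation rather than cite \cite{Mu} for something it does not contain.
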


\begin{proof} It is known (and easy to see) that $\overline{\rho}_{c_0}(t)=0$ for $0 \leqslant t \leqslant 1$ and for $t \geqslant 0$,
$$ \overline{\rho}_{\ell^p}(t)=(1+t^p)^{1/p}-1 \sim \dfrac{t^p}{p} \; \; (t \rightarrow 0).$$
For $L^p=L^p(0,1)$, Milman \cite{Mi} obtained the following estimates. For $L^1$, $\overline{\rho}_{L^1}(t)=t$. For $1<p<2$
$$ \dfrac{1}{p} t^p \leqslant  \overline{\rho}_{L^p}(t) \leqslant \dfrac{2}{p} t^p \; \; (t \rightarrow 0). $$
For $2<p<\infty$, there exists a constant $C_p>0$ such that
$$ (p-1)t^2 \leqslant \overline{\rho}_{L^p}(t) \leqslant C_p t^2 \; \; (t \rightarrow 0). $$ 
For $p=2$, $\overline{\rho}_{L^2}(t)=(1+t^2)^{1/2}-1$ since $L^2$ and $\ell^2$ are isometric. Hence the first statement of a) and b), and statement c) are a straightforward consequence of Theorem \ref{thm:main} and Proposition \ref{baire}. We now turn to the examples. a) can be found in \cite{Mu}. The job there is done for $p=2$, but the general case is almost the same. We include the example anyways for the sake of completness. Let $(e_i)$ be the usual canonic basis of $\ell^p$ and set $e_{1,0}=e_1$, $e_{1,1}=e_2$, $e_{2,0}=e_3$, $e_{2,1}=e_4$, $e_{2,2}=e_5$ $\ldots$ In this way, we can write $\ell^p=\oplus_{k=1}^{\infty} X_k$ where $X_k$ is the $(k+1)$-dimensional $\ell^p$ space with the basis $e_{k,0},\ldots,e_{k,k}$. Then $S$ is defined by $S=\oplus_{k=1}^{\infty} \frac{1}{2^k}B_k$ where $B_k$ is the usual backward shift on $\mathcal{L}(X_k)$, i.e. $B_k(e_{k,j})=e_{k,j-1}$ for $j \geqslant 1$ and $B_ke_{k,0}=0$. For $n \geqslant 1$, $S^n(e_{n,n})=2^{-n^2} e_{n,0}$ so $\|S
 ^n\| \geqslant 2^{-n^2}$. Let $x_k=\sum_{j=0}^{k} \alpha_j e_{k,j} \in X_k$, we have 
$$ \sum_{n=1}^{\infty} \left ( \dfrac{\|S^nx_k\|}{\|S^n\|} \right )^p \leqslant \sum_{n=1}^{k} \left ( \dfrac{2^{n^2}}{2^{nk}} (\sum_{j=n}^{k} |\alpha_j|^p)^{1/p} \right )^p \leqslant \sum_{n=1}^{k} \dfrac{1}{2^{n(k-n)}} \|x_k\|^p \leqslant 2 \|x_k\|^p. $$
It follows that for $x=\sum_{k=1}^{\infty} x_k$, we have
$$ \sum_{n=1}^{\infty} \left ( \dfrac{\|S^nx\|}{\|S^n\|} \right )^p \leqslant \sum_{k=1}^{\infty} \sum_{n=1}^{\infty} \left ( \dfrac{\|S^nx_k\|}{\|S^n\|} \right )^p \leqslant \sum_{k=1}^{\infty} 2\|x_k\|^p<\infty.$$
Now for b), recall that every $1 \leqslant p<\infty$, $\ell^p$ is isomorphic to a complemented subspace $E \subset L^p$. So write $L^p=E \oplus F$ where $F$ is a closed subspace of $L^p$. If $Q: \ell^p \rightarrow E$ is an isomorphism, then clearly $S_0=QSQ^{-1}$ is a bounded operator on $E$ satisfying for every $x \in E$,
$$ \sum_{n=1}^{\infty} \left ( \dfrac{\|S^n_0x\|}{\|S^n_0\|} \right )^p<\infty. $$
Let $P$ be the projection onto $E$ with respect to the decomposition  $L^p=E \oplus F$ and put $R=S_0 P$ which is bounded on $L^p$. Then for every $n$, $R^n=S^n_0 P$. Since $P$ is a projection onto $E$, we have $\|R^n\| \geqslant \| S^n_0 \|$. Hence 
$$ \sum_{n=1}^{\infty} \left ( \dfrac{\|R^nx\|}{\|R^n\|} \right )^{p} \leqslant \sum_{n=1}^{\infty} \left ( \dfrac{\|S^n_0(Px)\|}{\|S^n_0\|} \right )^{p} <\infty. $$ 
This shows b) for $p \leqslant 2$. For $p \geqslant 2$, use a similar argument and the fact that $\ell^2$ is isomorphic to a complemented subspace of $L^p$.
\end{proof}

It remains to prove Theorem \ref{thm:main}. Before going into the proof, we will need the following elementary lemma.

\begin{lemma}\label{lem:serie} Let $f,g$ be two maps from $\mathbb{R}^+$ to $\mathbb{R}^+$ such that for each $x>0$, $g(x)>0$. Assume that 
$$ \lim_{x \rightarrow 0} \dfrac{f(x)}{g(x)}=0 \; \; {\rm{and}} \; \; \lim_{x \rightarrow 0} f(x)=0. $$
Then, there exists a sequence $(\alpha_i) \subset \mathbb{R}^+$, $\alpha_i \rightarrow 0$ (when $i \rightarrow \infty$) such that 
$$ \sum_{i=1}^{\infty} f(\alpha_i)<\infty \; \; {\rm{and}} \; \; \sum_{i=1}^{\infty} g(\alpha_i)=\infty. $$
\end{lemma}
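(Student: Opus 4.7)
The natural construction is to build $(\alpha_i)$ by concatenating blocks: choose a sequence $\beta_k \downarrow 0$, and repeat each value $\beta_k$ with some integer multiplicity $N_k$. The two summation requirements then reduce to finding $\beta_k$ and $N_k$ with $\sum_k N_k f(\beta_k) < \infty$ while $\sum_k N_k g(\beta_k) = \infty$, and the decay $\alpha_i \to 0$ follows automatically from $\beta_k \to 0$ and the block ordering.

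The first step is to exploit both hypotheses simultaneously when picking $\beta_k$. Using $f(x)/g(x) \to 0$ and $f(x) \to 0$ as $x \to 0$, I can select $\beta_k \to 0$ with
$$ \frac{f(\beta_k)}{g(\beta_k)} \leqslant 2^{-k} \quad \text{and} \quad f(\beta_k) \leqslant 2^{-k}. $$
The second step is to set $N_k = \max\bigl(1, \lceil 1/g(\beta_k) \rceil\bigr)$, which uses only $g(\beta_k) > 0$. By construction $N_k g(\beta_k) \geqslant 1$, so $\sum_k N_k g(\beta_k) = \infty$.

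To control $N_k f(\beta_k)$, I split into two cases. If $g(\beta_k) \leqslant 1$, then $N_k \leqslant 2/g(\beta_k)$, hence $N_k f(\beta_k) \leqslant 2 f(\beta_k)/g(\beta_k) \leqslant 2^{1-k}$; if $g(\beta_k) > 1$, then $N_k = 1$ and $N_k f(\beta_k) = f(\beta_k) \leqslant 2^{-k}$. In either case $N_k f(\beta_k) \leqslant 2^{1-k}$, so the $f$-series converges geometrically. Finally, listing $N_1$ copies of $\beta_1$, then $N_2$ copies of $\beta_2$, and so on, yields a sequence $(\alpha_i)$ which tends to $0$ because every term from block $k$ onward is $\leqslant \beta_k$.

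There is no substantial obstacle here; the only subtlety is that $g$ is not assumed to vanish at $0$, so the auxiliary bound $f(\beta_k) \leqslant 2^{-k}$ is needed to prevent the branch $g(\beta_k) > 1$ from destroying summability of $\sum_k N_k f(\beta_k)$. Once both constraints are imposed on $\beta_k$, the rest of the argument is a routine block construction.
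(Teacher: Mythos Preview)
Your proof is correct and follows essentially the same block-repetition idea as the paper's: choose points $\beta_k\to 0$ where $f/g$ is small and repeat each $\beta_k$ roughly $1/g(\beta_k)$ times. The only organizational difference is that the paper first splits into the cases $g(x)\not\to 0$ and $g(x)\to 0$, whereas your extra constraint $f(\beta_k)\leqslant 2^{-k}$ lets you treat both situations in a single construction.
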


\begin{proof} If $g(x) \nrightarrow 0$, there exists $\epsilon>0$ and $(\alpha_i) \subset \mathbb{R}^+$, $\alpha_i \rightarrow 0$ such that for each $i \geqslant 1$, $g(\alpha_i) \geqslant \epsilon$. By passing to a subsequence, one can assume that $\sum_{i=1}^{\infty} f(\alpha_i)<\infty$ (because $f(x) \rightarrow 0$) and since $g(\alpha_i) \geqslant \epsilon$ for each $i$, one also gets $\sum_{i=1}^{\infty} g(\alpha_i)=\infty$. Now, if $g(x) \rightarrow 0$, fix $(\epsilon_i) \subset \mathbb{R}^+$ with $\epsilon_i>0$ for each $i$ such that $\sum_{i=1}^{\infty} \epsilon_i<\infty$. Choose for each $i \geqslant 1$, $x_i \in ]0,\frac{1}{i}[$ such that $f(x_i) \leqslant \epsilon_i g(x_i)$ and $g(x_i) \leqslant \frac{1}{2}$. Put $n_1=1$ and $n_{i+1}=n_i+[\frac{1}{g(x_i)}]$ where $[x]$ denotes the integer part of a number $x$. If $k$ is an integer with $n_i \leqslant k \leqslant  n_{i+1}-1$, put $\alpha_k=x_i$, thus $\alpha_k \rightarrow 0$ because $x_i \rightarrow 0$. We have 
\begin{eqnarray*} 
\sum_{j=1}^{\infty} f(\alpha_j)&=&\sum_{i=1}^{\infty} \sum_{k=n_i}^{n_{i+1}-1} f(\alpha_k)= \sum_{i=1}^{\infty} f(x_i)(n_{i+1}-n_i) \\
& \leqslant & \sum_{i=1}^{\infty} \epsilon_i g(x_i)(n_{i+1}-n_i) \leqslant \sum_{i=1}^{\infty} \epsilon_i<\infty.
\end{eqnarray*}
On the other hand, 
\begin{eqnarray*}
\sum_{j=1}^{\infty} g(\alpha_j)&=&\sum_{i=1}^{\infty} g(x_i)(n_{i+1}-n_{i}) \\
& \geqslant & \sum_{i=1}^{\infty}(1-g(x_i)) \geqslant \sum_{i=1}^{\infty} \dfrac{1}{2}=\infty,
\end{eqnarray*} and this concludes the proof of the lemma.
\end{proof}

We are now ready for the proof of Theorem \ref{thm:main}. We will combine some techniques from \cite{Mu} and \cite{L}.

\begin{proof}[Proof of Theorem \ref{thm:main}] We can make the following assumption (*): for every subspace $M$ of finite codimension, $\|T^n_{\vert_{M}}\|>\frac{1}{2} \|T^n\|$ for all but a finite number of $n$'s. Indeed, if this is not true, then by Proposition \ref{firstcase}, there exists $\epsilon>0$, a point $x \in X$  and a non decreasing sequence $(n_j)$ such that for each $j$:
$$  \dfrac{\|T^{n_j}x\|}{\|T^{n_j}\|} \geqslant \epsilon. $$
Since $\rho(\epsilon)>0$, this obviously implies that 
$$ \sum_{n=1}^{\infty} \rho \left( \dfrac{\|T^nx\|}{\|T^n\|} \right )=\infty.$$
Hence we can suppose that (*) holds. By Lemma \ref{lem:serie}, there exists a sequence $(\tilde{\alpha_i})$,  $\tilde{\alpha_i} \rightarrow 0$ such that 
$$ \sum_{i=1}^{\infty} \overline{\rho}_{X}(\tilde{\alpha_i})<\infty \; \; {\rm{and}} \; \; \sum_{i=1}^{\infty} \rho(\tilde{\alpha_i})=\infty. $$
Since $\rho$ is non-decreasing and $\overline{\rho}_{X}(2t)=O(\overline{\rho}_{X}(t))$, we see that 
$$ \sum_{i=1}^{\infty} \overline{\rho}_{X}(\alpha_i)<\infty \; \; {\rm{and}} \; \; \sum_{i=1}^{\infty} \rho(\dfrac{\alpha_i}{2})=\infty $$ where we have put $\alpha_i=2 \tilde{\alpha_i}$. Now, using again the hypothesis $\overline{\rho}_{X}(2t)=O(\overline{\rho}_{X}(t))$, we see that for each $k$, $\sum_{i=1}^{\infty} \overline{\rho}_{X}(2^k \alpha_i)<\infty$ and thus $\prod_{i=1}^{\infty} (1+\overline{\rho}_{X}(2^k \alpha_i))$ converges. From this and the fact that $\alpha_i \rightarrow 0$, we can find an increasing sequence of integers $(m_k)$ such that 
$$ \alpha_i \leqslant 2^{-k} \; (i \geqslant m_k) \; \; {\rm{and}} \; \; \prod_{i=m_k}^{\infty} (1+\overline{\rho}_{X}(2^k \alpha_i)) \leqslant 2. $$
Without loss of generality, we may assume that $m_1=1$. Let us also fix once for all a sequence $(\beta_i) \subset \mathbb{R}^+$, $\beta_i>0$ such that $\prod_{i=1}^{\infty} (1+\beta_i)$ converges. Next, we are going to construct by induction 2 other sequences $(n_j)$ and $(u_i)$  such that: $n_1<n_2<\cdots$, $\|u_i\|=1$. Further, these sequences will have to satisfy two properties. First, for each $l \geqslant 1$ and $j \leqslant l$:

\begin{equation} \label{equation1} \tag{1} 
\left \|T^{n_j} \left ( \sum_{i=1}^{l} \alpha_i u_i \right ) \right \| \geqslant \dfrac{\alpha_j}{2} \|T^{n_j}\|.
\end{equation}

Secondly, for each $k \geqslant 1$, and $m_k \leqslant l \leqslant m_{k+1}-1$:
\begin{equation} \label{equation2} \tag{2} \left \| \sum_{i=m_k}^{l}  \alpha_i u_i \right \| \leqslant 2^{1-k} \left ( \prod_{i=m_k}^{l} (1+\beta_i) \right ) \left ( \prod_{i=m_k}^{l}(1+\overline{\rho}_{X}(2^k \alpha_i)) \right ). 
\end{equation}

Once this is done, by (\ref{equation2}) and since $\prod_{i=m_k}^{\infty} (1+\overline{\rho}_{X}(2^k \alpha_i)) \leqslant 2$, we see that $\left ( \sum_{i=1}^{l} \alpha_i  u_i \right )_l$ is Cauchy, and thus converges to some point $x$. By (\ref{equation1}), for fixed $j$, we get at the limit
$$ \|T^{n_j}x\| \geqslant \dfrac{\alpha_j}{2} \|T^{n_j}\|,$$ whence
$$ \sum_{n=1}^{\infty} \rho \left( \dfrac{\|T^nx\|}{\|T^n\|} \right ) \geqslant \sum_{j=1}^{\infty} \rho \left( \dfrac{\|T^{n_j}x\|}{\|T^{n_j}\|} \right ) \geqslant \sum_{j=1}^{\infty} \rho(\dfrac{\alpha_j}{2})=\infty$$ and this is the desired conclusion. Now, we have to do the induction part of the proof. Set $n_1=1$. There exists $u_1 \in X$ such that $\|u_1\|=1$ and $\|Tu_1\| \geqslant \frac{\alpha_1}{2}$. Let $k \geqslant 1$ and assume the construction has been carried out until a point $m_k \leqslant l \leqslant m_{k+1}-1$. If $l+1=m_{k+1}$, then
$$ \|\alpha_{l+1} u_{l+1} \| \leqslant 2^{-(k+1)} \leqslant 2^{1-(k+1)} (1+\beta_{l+1})(1+\overline{\rho}_{X}(2^{k+1} \alpha_ {l+1})),$$ 
so $(\ref{equation2})$ is automatically satisfied for $l+1$. Arguments to get (\ref{equation1}) will be detailed later. Suppose now that $l+1 \leqslant m_{k+1}-1$. For visibility until the end of the proof, we put $s_l=\sum_{i=m_k}^{l}  \alpha_i u_i$ and $x_l=\sum_{i=1}^{l}  \alpha_i u_i$. We distinguish 2 cases. Suppose first that $\|s_l\| \leqslant 2^{-k}$. Then for any choice of $u_{l+1}$ such that $\|u_{l+1}\|=1$, 
$$ \left \|\sum_{i=m_k}^{l+1} \alpha_i u_i \right \| \leqslant \|s_l\|+\alpha_{l+1} \leqslant 2^{1-k} .$$  Hence (\ref{equation2}) is again always satisfied. We indicate now how to get (\ref{equation1}). For each $j \leqslant l$, select a linear functional $f_j$ such that $\|f_j\|=1$ and $\|f_j(T^{n_j}x_l)\|=\|T^{n_j}x_l\|$ and put 
$$ M=\bigcap_{j=1}^{l} \ker (f_j T^{n_j})$$ which is clearly of finite codimension. Applying (*), we can find $n_{l+1}>n_{l}$ and $v \in X$, $\|v\|=1$ such that  
$$ \|T^{n_{l+1}}v\| \geqslant \dfrac{1}{2} \|T^{n_{l+1}}\|. $$
Since we have
\begin{eqnarray*}
\|T^{n_{l+1}}(x_l+\alpha_{l+1} v)\|+\|T^{n_{l+1}}(x_l-\alpha_{l+1} v)\|& \geqslant & 2 \|T^{n_{l+1}}v\| \alpha_{l+1} \\
& \geqslant & \|T^{n_{l+1}}\| \alpha_{l+1},
\end{eqnarray*}
there exists $\epsilon=\pm 1$ such that 
$$ \|T^{n_{l+1}}(x_l+\epsilon \alpha_{l+1} v)\| \geqslant \dfrac{\alpha_{l+1}}{2} \|T^{n_{l+1}}\|.$$
Putting $u_{l+1}=\epsilon v$, we have proved (\ref{equation1}) for $j=l+1$. For $j \leqslant l$, we obtain
\begin{eqnarray*} \| T^{n_j}(x_l+\epsilon_{l+1} \alpha_{l+1} u_{l+1})\| & \geqslant & \|f_jT^{n_j}(x_l+\epsilon_{l+1} \alpha_{l+1} u_{l+1}) \| \\
& = & \|f_jT^{n_j}x_l\|=\|T^{n_j}x_l\| \\
& \geqslant & \dfrac{\alpha_j}{2} \|T^{n_j}\|,
\end{eqnarray*}
where the last inequality follows from the induction hypothesis of (\ref{equation1}). Suppose now that $\|s_l\| \geqslant 2^{-k}$. From the definition of the modulus of asymptotic smoothness, we can find a subspace $Y \subset X$ of finite codimension such that for all $y \in Y$, $\|y\|=1$
\begin{equation} \label{equation3} \tag{3}
\left \| \dfrac{s_l}{\|s_l\|}+2^k \alpha_{l+1}y \right \| \leqslant (1+\beta_{l+1})(1+\overline{\rho}_{X}(2^k \alpha_{l+1})).
\end{equation}
This time, we set
$$ M=\bigcap_{j=1}^{l} \ker (f_j T^{n_j}) \; \bigcap \; Y$$ where the $f_j$ are constructed exactly as in the previous case. We also construct in the same way $n_{l+1}$ and $u_{l+1} \in M$, $\|u_{l+1}\|=1$ so that (\ref{equation1}) holds for $l+1$. Now (\ref{equation3}) with $y=u_{l+1} \in M \subset Y$ yields

\begin{equation} \label{equation4} \tag{4}
 \left \|s_l+2^k\|s_l\|\alpha_{l+1} u_{l+1} \right \| \leqslant \|s_l\| (1+\beta_{l+1})(1+\overline{\rho}_{X}(2^k \alpha_{l+1})). 
\end{equation} 
The condition  $\|s_l\| \geqslant 2^{-k}$ implies that $s_l+\alpha_{l+1} u_{l+1}$ is on the segment joining $s_l$ with $s_l+2^k \|s_l\|\alpha_{l+1} u_{l+1}$. Hence, we get from (\ref{equation4})

\begin{eqnarray*}
\left \|\sum_{i=m_k}^{l+1} \alpha_i u_i \right \|& = & \|s_l+\alpha_{l+1} u_{l+1}\| \\
 & \leqslant & \|s_l\|(1+\beta_{l+1})(1+\overline{\rho}_{X}(2^k \alpha_{l+1})) \\
 & \leqslant & 2^{1-k} \left ( \prod_{i=m_k}^{l+1} (1+\beta_i) \right ) \left ( \prod_{i=m_k}^{l+1}(1+\overline{\rho}_{X}(2^k \alpha_i)) \right ),
\end{eqnarray*}
where the last inequality follows from the induction hypothesis. We see that (\ref{equation2}) is satisfied for $l+1$ and this ends the construction.
\end{proof}

\begin{remark} What Lindenstrauss \cite{L} showed is the following result: if $(x_i) \subset X$ is such that $\sum_{i=1}^{\infty}\epsilon_i x_i$ diverges for every choice of signs $(\epsilon_i) \subset \{-1,1\}$, then $\sum_{i=1}^{\infty} \rho_X(\|x_i\|)=\infty$ where $\rho_X$ is the usual modulus of smoothness defined for $t \geqslant 0$ by
$$ \rho_X(t)=\dfrac{1}{2} \sup_{\|x\|=1, \|y\|=1} \; (\|x+ty\|+\|x-ty\|-2). $$
\end{remark}

\begin{remark} We do not know if the assumption $\overline{\rho}_{X}(2t)=O(\overline{\rho}_{X}(t))$ is really necessary although it seems to us that a "bad" Orlicz space may not satisfy this property.
\end{remark}

\end{document}